\title[The Kontsevich--Rosenberg principle for bi-symplectic forms]{The Kontsevich--Rosenberg principle for bi-symplectic forms}
\author[D. Fern\'andez]{David Fern\'andez}
  \address{Instituto de Matem\'atica Pura e Aplicada (IMPA) \and Universidade Federal do Rio de Janeiro \\ 
Estrada Dona Castorina 110
Rio de Janeiro / Brasil 22460-320
    }
 \email{davidfa@impa.br}
\thanks{The author is supported by IMPA
  and CAPES through their postdoctorate of excellence fellowships at
  UFRJ}
\theoremstyle{plain}
\newtheorem{theorem}{Theorem}[section]
\newtheorem{lemma}[theorem]{Lemma}
\newtheorem{proposition}[theorem]{Proposition}
\theoremstyle{definition}
\newtheorem{definition}[theorem]{Definition}
\newtheorem{definition-theorem}[theorem]{Definition-Theorem}
\newtheorem{example}[theorem]{Example}
\theoremstyle{remark}
\newtheorem{remark}[theorem]{Remark}
\newcommand{\secref}[1]{\S\ref{#1}}
\numberwithin{equation}{section} \setcounter{tocdepth}{1}
\newcommand{\surj}{\to\kern-1.8ex\to}
\newcommand{\lto}{\longrightarrow}
\newcommand{\lra}[1]{\stackrel{#1}{\longrightarrow}}
\newcommand{\defeq}{\mathrel{\mathop:}=} 
\newcommand{\Mat}{\operatorname{Mat}}
\newcommand{\comm}{\operatorname{comm}}
\newcommand{\Aut}{\operatorname{Aut}}
\newcommand{\RR}{\operatorname{R}}
\newcommand{\Proj}{\operatorname{Proj}}
\newcommand{\kk}{{\Bbbk}} 
\newcommand{\op}{{\operatorname{op}}} 
\newcommand{\e}{{\operatorname{e}}} 
\newcommand{\du}{\operatorname{d}\!}
\newcommand{\out}{{\operatorname{out}}} 
\newcommand{\inn}{{\operatorname{inn}}} 
\newcommand{\QQ}{{Q}} 
\newcommand{\Q}[1]{{Q_{#1}}} 
\newcommand{\End}{\operatorname{End}}
\newcommand{\nc}{\operatorname{nc}}
\newcommand{\ab}{\operatorname{ab}}
\newcommand{\Spec}{\operatorname{Spec}}
\newcommand{\GL}{\operatorname{GL}}
\newcommand{\IIm}{\operatorname{Im}}
\newcommand{\Rep}{\operatorname{Rep}}
\newcommand{\DRep}{\operatorname{DRep}}
\newcommand{\Sym}{\operatorname{Sym}}
\newcommand{\T}{\operatorname{T}}
\newcommand{\Hom}{\operatorname{Hom}}
\newcommand{\Der}{\operatorname{Der}}
\newcommand{\DDR}[2]{{\operatorname{DR}_{\nc}^{#1}{#2}}}
\newcommand{\D}{\operatorname{\mathbb{D}er}}
\newcommand{\Tr}{\operatorname{Tr}}
\newcommand{\Id}{\operatorname{Id}}
\definecolor{lemon}{RGB}{189,253,0}
\definecolor{spring}{RGB}{124,252,0}
\definecolor{palegreen}{RGB}{50,205,50}
\definecolor{flojo}{RGB}{238,238,0}
\definecolor{Yellow}{RGB}{254,203,0}
\definecolor{Green}{RGB}{0,133,66}
\definecolor{Blue}{RGB}{0,70,193}
\definecolor{MyDarkBlue}{rgb}{0,0.08,0.45}
\definecolor{greyclaro}{RGB}{118,118,118}
\definecolor{greyoscuro}{RGB}{80,80,80}
\definecolor{greyMUYoscuro}{RGB}{40,40,40}
\definecolor{blue-green}{rgb}{0.0, 0.87, 0.87}
	\definecolor{ao}{rgb}{0.0, 0.0, 1.0}
	\definecolor{brandeisblue}{rgb}{0.0, 0.44, 1.0}
\begin{document}
\maketitle

\begin{abstract}
In this expository note, we explain the so-called Van den Bergh functor, which enables the formalization of the Kontsevich--Rosenberg principle, whereby a structure on an associative algebra has geometric meaning if it induces standard geometric structures on its representation spaces. Crawley-Boevey, Etingof and Ginzburg proved that bi-symplectic forms satisfy this principle; this implies that bi-symplectic algebras can be regarded as noncommutative symplectic manifolds. In this note, we use the Van den Bergh functor to give an alternative proof.
\end{abstract}

\section{Introduction}
Noncommutative geometry is a confuse term which is used in different settings with different meanings.
For this reason, we should start by delimiting the meaning of ``noncommutative'' in this note.\\

We will interested in noncommutative algebraic geometry, meaning that our basic object will be a (finitely generated) associative algebra as commutative algebras are the basic objects in familiar algebraic geometry.
Nevertheless, following \cite{Gi05}, we should distinguish two approaches to this fascinating area.
Noncommutative geometry ``in the small'' is devoted to generalize conventional algebraic geometry to the noncommutative realm; typically noncommutative deformations (called \emph{quantizations}) of their commutative counterparts.
On the other hand, noncommutative geometry ``in the large'' is not a generalization of the commutative theory.
In fact, we may mean that it is parallel to the conventional one; whereas the former is governed by the operad of associative algebras (not necessarily commutative) algebras, the latter is ruled by the operad commutative algebras.
This explains why some authors (e.g. \cite{Ta17}) prefer the name ``associative geometry''.
Regarding this note, as it was pointed out in \cite{ACF15}, in a heuristic way, noncommutative algebraic geometry is devoted to the study of associative algebras as if they were algebras of functions on varieties or schemes, i.e., a finitely generated associative algebra is viewed as the algebra of functions on a finite-type ``noncommutative affine scheme''.
The definition of noncommutative (not necessarily affine) spaces is a major and intricate topic which we will not explore here (see \cite{Ro98} and \cite{Ka98} for some interesting approaches).
Sketchy, (see \cite{Smi}, Chapter 3), the idea is that noncommutative spaces are made manifest by the modules that live on them in the same way that the properties of a commutative scheme are manifested by the category of quasi-coherent modules on it.
Now, the modules over a noncommutative space form, by definition, an abelian category, which is the basic object of study in noncommutative geometry.
Hence, the \emph{motto} would be that noncommutative spaces are abelian categories.
So some interesting abelian, triangulated or differential graded categories may be interpreted as noncommutative objects.\\

The aim of this expository note is two-fold. We want to introduce noncommutative algebraic geometry based on the key notion of double derivations on associative algebras, and then formulate the definition of bi-symplectic forms, the noncommutative analogues of symplectic forms.
So, this document may be regarded as a continuation of other expository works as \cite{Gi05} or \cite{Ta17} which dealt with derivations.
Secondly, it is a reflection on the Kontsevich--Rosenberg principle, which establishes a bridge between the noncommutative and commutative settings.
The ultimate goal of this work is to present the \emph{Van den Bergh functor} that realizes and formalizes this principle, and allows us to give a more natural proof of that bi-symplectic forms satisfy the Kontsevich--Rosenberg principle (this was originally proved in \cite{CBEG07}, Theorem 6.4.3 (ii)).
We will follow closely the approach given by Berest, Ramadoss and his coauthors (see \cite{BFR14}). 
Hopefully, this expository note may be used as an introduction to their insightful works.

\subsection*{Contents}
In Section \secref{notation}, we start by introducing some notions and notations that will be used throughout.
Section \secref{repr-quivers} shows that the space of representations of a double quiver is the cotangent bundle of the representation space of the underlying quiver.
Following \cite{ACF17}, the aim of Section \secref{bi-symp} is to define the key notion of bi-symplectic algebras, giving a friendly introduction to noncommutative algebraic geometry based on double derivations.
In the next section, we construct the scheme of representations of a (finitely generated) algebra as the representing object of a functor.
In Section \secref{VdB}, we introduce the Van den Bergh functor, which is representable, allowing us to formalize the Kontsevich--Rosenberg principle, as it will be showed in \secref{KR}.
Then we proved that bi-symplectic forms are the noncommutative analogues of conventional symplectic forms, since they satisfy the Kontsevich--Rosenberg principle.

\subsection*{Disclaimer}
These notes have a expository character and no originality is intended.
The author wishes to give a coherent survey on noncommutative algebraic geometry based on double derivations, as in \cite{CBEG07, VdB08}, which may be useful to post-graduate students or beginners in the area.
As the results are extracted from original articles, we tried to give the exact reference where the reader can learn the proof.
In a sequel of this note, we shall plan to give an introduction to the Kontsevich--Rosenberg principle for double Poisson algebras (in the sense of \cite{VdB08}), using the Van den Bergh functor.

\subsection*{Acknowledgements}
This document was the basis of the talk given by the author in the Symplectic Geometry session of the 31${}^\circ$ Col\'oquio Brasileiro de Matem\'atica, which took place at IMPA, Rio de Janeiro, in August 2017.
I am very grateful to Marta Bator\' eo and Leonardo Macarini, organizers of the session, for the opportunity.
Thanks are due to Alastair King whose questions and suggestions about the original proof of that bi-symplectic forms satisfy the Kontsevich--Rosenberg principle were the seed of this work, 
and to Luis \'Alvarez-C\'onsul, who introduced me to this fascinating area and for its generosity to share with me some of his insights.
Finally, by email, Yuri Berest gave me an elegant proof of Proposition \ref{prop-1-forms}. I am deeply indebted for his help. 
Finally, the author wish to thank Henrique Bursztyn, Alejandro Cabrera, Reimundo Heluani, and Marco Zambon, for useful discussions.
\section{Notation and conventions}
\label{notation}

\subsection{Algebras}
Let $\kk$ be a field of characteristic zero. An \emph{associative algebra} over $\kk$ is a $\kk$-vector space $A$ together with a bilinear map $m\,\colon A\times A\to A$, $(a,b)\mapsto ab$, such that $(ab)c=a(bc)$. A \emph{unit} in an associative algebra $A$ is an element $1\in A$ such that $1a=a1=a$. From now on, by an algebra $A$ we will mean a finitely generated (over $\kk$) associative algebra with a unit. A basic example of an associative algebra is the algebra $\End V$ of endomorphisms of a $\kk$-vector space $V$ to itself (the multiplication is given by the composition). The free algebra $\kk\langle x_1,x_2,...,x_n\rangle$ is an associative algebra, whose basis consists of words in letters $x_1,...,x_n$, and multiplication in this basis is simply the concatenation of words.
A \emph{homomorphism of algebras} $f\colon\, A\to B$ is a linear map such that $f(xy)=f(x)f(y)$ for all $x,y\in A$, and $f(1_A)=1_B$. The category of associative (resp. commutative) $\kk$-algebras will be denoted $\texttt{Alg}_{\kk}$ (resp. $\texttt{CommAlg}_{\kk}$). As usual \texttt{Sets} denotes the category of sets.\\


The unadorned symbols $\otimes=\otimes_\kk$, $\Hom=\Hom_\kk$, will denote the tensor product and the space of linear homomorphisms over the base field.
The opposite algebra and the enveloping algebra of an associative algebra $A$ will be denoted $A^\op$ and $A^\e:=A\otimes A^\op$, respectively. The category of $A$-bimodules  will be denoted $\texttt{Bimod}(A)$. Also, we identify left $A^\e$-modules with $A$-bimodules.\\

The $A$-bimodule $A\otimes A$ has two $A$-bimodule structures, called the \emph{outer} bimodule structure $(A\otimes A)_{\out}$ and the \emph{inner} bimodule structure $(A\otimes A)_{\inn}$, which correspond to the left $A^\e$-module structure ${}_{A^\e}A^\e$ and right $A^\e$-module structure ${}_{({A^\e)}^{\op}}A^\e=(A^\e)_{A^\e}$, respectively. More explicitly,
\begin{align}
a_1(a\otimes b) b_1&= (a_1a)\otimes (bb_1)\quad  \text{ in } (A\otimes A)_\out,
\label{outer}
\\
a_1*(a\otimes b) *b_1&= (ab_1)\otimes (a_1b) \quad \text{ in } (A\otimes A)_\inn.
\label{inner}
\end{align}
Let $M$ be an $A$-bimodule, we define the \emph{bidual} of $M$ as:
\[
M^\vee:=\Hom_{A^\e}(M,(A\otimes A)_\out),
\]
where the $A$-bimodule structure on $M^\vee$ in induced by the one in $(A\otimes A)_\inn$.
An $A$-bimodule $M$ is called \emph{symmetric} if $am=ma$ for every $a\in A$ and $m\in M$. 
Finally, $\Mat_N(\kk)$ denotes the algebra of $N\times N$ matrices with entries in the field $\kk$.

\subsection{Quivers}
In this subsection, we establish some well-known notions and results which enable us to fix notation. We will closely follow the modern references \cite{ARS95} and \cite{ASS06}.\\

A \emph{quiver} $\QQ$ consists of a set $\Q0$ of vertices, a set $\Q1$
of arrows and two maps $t,h\colon\Q1\to\Q0$ assigning to each arrow
$a\in\Q1$, its \emph{tail} and its \emph{head}. We write $a\colon i\to
j$ to indicate that an arrow $a\in\Q1$ has tail $i=t(a)$ and head
$j=h(a)$. Given an integer $\ell\geq 1$, a non-trivial path of length
$\ell$ in $\QQ$ is an ordered sequence of arrows
\[
p=a_\ell\cdots a_1, 
\]
such that $h(a_j)=t(a_{j+1})$ for $1\leq j<\ell$. This path $p$ has tail
$t(p)=t(a_1)$, head $h(p)=h(a_\ell)$, and is represented pictorially as follows.
\[
\bullet\stackrel{a_\ell}{\longleftarrow}\bullet\longleftarrow\cdots\longleftarrow\bullet\stackrel{a_1}{\longleftarrow}\bullet
\]
For each vertex $i\in\Q0$, $e_i$ is the \emph{trivial path} in $\QQ$,
with tail and head $i$, and length $0$. A \emph{path} in $\QQ$ is
either a trivial path or a non-trivial path in $\QQ$. The path algebra
$\kk\QQ$ is the associative algebra with underlying vector space
\[
\kk Q=\bigoplus_{\text{paths $p$}}\kk p,
\]
that is, $\kk Q$ has a basis consisting of all the paths in $Q$, with
the product $pq$ of two non-trivial paths $p$ and $q$ given by the
obvious path concatenation if $t(p)=h(q)$, $pq=0$ otherwise,
$pe_{t(p)}=e_{h(p)}p=p$, $pe_i=e_jp=0$, for non-trivial paths $p$ and
$i,j\in\Q0$ such that $i\neq t(p)$, $j\neq h(p)$, and $e_ie_i=e_i, e_ie_j=0$
for all $i,j\in\Q0$ if $i\neq j$. We will always assume that a
quiver $\QQ$ is finite, i.e. its vertex and arrow sets are finite, so
$\kk Q$ has a unit
\begin{equation}\label{eq:path-algebra-unit}
1=\sum_{i\in\Q0}e_i.
\end{equation}
Define vector spaces
\[
R_\QQ=\bigoplus_{i\in \Q0}\kk e_i,\quad V_\QQ=\bigoplus_{a\in \Q1}\kk a.
\]
Then $R_\QQ\subset \kk\QQ$ is a semisimple commutative (associative)
algebra, 
 because it is the subalgebra spanned by the trivial
paths, which are a complete set of orthogonal idempotents of $\kk\QQ$,
i.e. $e_i^2=e_i$, $e_ie_j=0$ for $i\neq j$, and, by \eqref{eq:path-algebra-unit},
\[
x=\sum_{i,j\in\Q0}e_jxe_i, \text{ for all $x\in \kk\QQ$}
\]
Furthermore, as $V_\QQ$ is a
vector space with basis consisting of the arrows, it is an
$R_\QQ$-bimodule with multiplication $e_jae_i=a$ if $a\colon i\to j$
and $e_iae_j=0$ otherwise, and the path algebra is the tensor algebra
of the bimodule $V_\QQ$ over $R\defeq R_\QQ$, that
is (see Proposition 1.3 in \cite{ARS95}),
\begin{equation}\label{eq:grPathAlg.1}
\kk\QQ=\T_{R}V_\QQ,
\end{equation}
where a path $p=a_\ell\cdots a_1\in k\QQ$ is identified with a tensor
product $a_\ell\otimes\cdots\otimes a_1\in \T_{R}V_\QQ$.\\

Let $A=\kk\QQ$. It is well known
\footnote{ See e.g. \S
 4.6 of \url{https://www.math.uni-bielefeld.de/~sek/kau/text.html}.}
that the decomposition
\[
A=\bigoplus_{i\in\Q0}Ae_i,
\]
is a decomposition of the $A$-module ${}_AA$ as a direct sum of
pairwise non-isomorphic indecomposable projective $A$-modules. Note
that the vector space underlying $Ae_i$ has a basis consisting of all
the paths in $\QQ$ with tail $i$. In fact, $\{ Ae_i\mid i\in\Q0\}$ is
a complete
 set of indecomposable finitely generated $A$-modules up to isomorphism (see, for instance, \cite{ASS06}). Furthermore, the evaluation map
\begin{equation}\label{eq:indecomp-proj}
\Hom_A(Ae_i,M)\lra{\cong} e_iM\,\colon \quad f\longmapsto f(e_i)
\end{equation}
is a natural isomorphism, for all $i\in\Q0$ and all $A$-modules $M$.\\

\section{Representations of double quivers}
\label{repr-quivers}

\subsection{Representations of quivers}
Let $Q=(Q_0,Q_1, h, t)$ be a fixed quiver whose path algebra will be denoted $\kk Q$.
A \emph{representation of a quiver} $Q$ is the following collection of data
\begin{enumerate}
\item [\textup{(i)}]
For every vertex $i\in Q_0$, a $\kk$-vector space $V_i$;
\item [\textup{(ii)}]
For every arrow $a\in Q_1$, $a\,\colon i\to j$, a linear operator $X_a\,\colon V_i\to V_j$.
\end{enumerate}
We denote the representation by $V$.
A morphism of representations $f\,\colon V\to W$ is a collection of linear operators $f_i\,\colon V_i\to W_i$ which commute with the operators $X_a$: if $a\,\colon i\to j$, then $f_j\circ X_a=X_a\circ f_i$; in other words
\[
\xymatrix{
V_i\ar[d]_-{X_a}\ar[r]^-{f_i} & W_i\ar[d]^-{X_a}
\\
V_j\ar[r]^-{f_j} & W_j
}
\]
Morphisms $V\to W$ form a vector space, which we shall denote by $\Hom(V,W)$. Also, $\End V $ stands for the \emph{algebra of endomorphisms} of a representation $V$, and $\Aut V=\{\phi\in\End V\mid f\text{ is invertible}\}$ will be the \emph{group of automorphisms} of $V$. Throughout, we will only consider finite-dimensional representations; i.e. those where each space $V_i$ is finite-dimensional, whose dimension will be denoted $v_i$.

\begin{example}
Let $Q$ be the \emph{Jordan quiver}, that is, the quiver with one vertex and one arrow. Then a representation of this quiver is a pair $(V,X)$, where $V$ is a $\kk$-vector space, and $X\,\colon V\to V$ is a linear map. Hence, classifying representations of $Q$ is equivalent to classifying linear operators up to change of basis, or matrices up to conjugacy.
 If $\kk$ is an algebraically closed field (i.e. $\kk=\overline{\kk}$), the classification is given by the Jordan canonical form; if $\kk$ is not algebraically closed, the answer is more difficult.
\end{example}

It is clear that finite-dimensional representations of the quiver $Q$ form a category, denoted by $\Rep Q$. In fact, this category is endowed with direct sums, subrepresentations, quotients, kernels and images (similar to those in the category of group representations). Remarkably, images and kernels satisfy the usual properties such as $\IIm f\simeq V/(\ker f)$; thus $\Rep Q$ is an abelian category over $\kk$.  This also follows from the equivalence of $\Rep Q$ with the category of modules over the path algebra $\kk Q$ (see, for instance, \cite{Smi}, Theorem 5.9).\\

Given a finite-dimensional representation of the quiver $Q$, denote $\textbf{v}=\dim V=\sum_{i\in Q_0} v_i$. Choosing a basis in each $V_i$, we can identify $V_i\simeq\kk^{v_i}$. The structure of a representation of $Q$ is described by a collection of matrices $X_a\in\Hom(\kk^{v_i},\kk^{v_j})=\Mat_{v_j\times v_i}(\kk)$, for each edge $a\,\colon i\to j$, or equivalently, by a vector $X$ in the space
\begin{equation}
\RR(Q,\textbf{v})=\bigoplus_{a\in Q_1}\Hom(\kk^{v_{t(a)}},\kk^{v_{h(a)}}).
\label{def-repr-inicial}
\end{equation}
Conversely, every $X\in\RR(Q,\textbf{v})$ defines a representation $V^{X}=(\{\kk^{v_i}\},\{X_a\})$. We shall occasionally use the notation $\RR(V)=\bigoplus_{a\in Q_1}\Hom_\kk(V_{t(a)},V_{h(a)})$.

\subsection{Representations of double quivers}
For any pair of finite-dimensional vector spaces $V$, $W$ we have a canonical pairing $\Hom(V,W)\otimes\Hom(W,V)\to\kk$, given by $(f,g)\mapsto \Tr(fg)$. By the properties of the trace, this pairing is symmetric and nondegenerate; consequently, it defines an isomorphism 
\begin{equation}
\Hom(W,V)\simeq \Hom(V,W)^*.
\label{ism-vector-hom}
\end{equation}

Now, one of the most important class of examples of symplectic manifolds are the cotangent bundles. Let $X$ be a manifold of dimension $n$, and let $T^*X$ be its cotangent bundle
\[
T^*X=\{(x,\lambda)\}_{x\in X,\; \lambda\in T^*_x X},
\]
which is a manifold of dimension $2n$. Then $T^*X$ is endowed with a canonical 1-form $\alpha$ (called the \emph{Liouville form}). In fact, $T^*X$ has a canonical symplectic structure given by the 2-form $\omega=\du\alpha$. Explicitly, if $\{q^i\}$ are local coordinates on $X$ and $\{p_i\}$ are corresponding coordinates on $T^*_xX$, so that the $2n$-tuple $\{p_i,q^i\}$ are local coordinates on $T^*X$, then locally the symplectic form $\omega$ is given by
\[
\omega=\sum_i\du p_i\wedge\du q^i.
\]

In the case of a finite-dimensional vector space $E$, we have $T^*E=E\oplus E^*$, and the symplectic form on $T^*E$ is given by
\begin{equation}
\omega\left((v_1,\lambda_1),(v_2,\lambda_2)\right)=\langle \lambda_1,v_2\rangle-\langle \lambda_2,v_1\rangle.
\label{symplectic-form-vectro space}
\end{equation}
Here $\langle-,-\rangle$ stands for the canonical pairing between $E$ and its dual.\\

Let $\overline{Q}$ be the double quiver obtained from $Q$ by adding an additional arrow $a^*\,\colon j\to i$ for every arrow $a\,\colon i\to j$ in $Q$. By \eqref{def-repr-inicial} and \eqref{ism-vector-hom}, it is immediate that
\begin{align*}
\RR(\overline{Q},\textbf{v})&=\bigoplus_{a\in \overline{Q}_1}\Hom(\kk^{v_{h(a)}},\kk^{v_{t(a)}})
\\
&=\bigoplus_{a\in Q_1}\Hom(\kk^{v_{h(a)}},\kk^{v_{t(a)}})\oplus \bigoplus_{a\in \overline{Q}_1\setminus Q_1}\Hom(\kk^{v_{t(a)}},\kk^{v_{h(a)}})
\\
&=\RR(Q,\textbf{v})\oplus \RR(Q,\textbf{v})^*
\\
&=T^*\RR(Q,\textbf{v}).
\end{align*}

\section{Bi-symplectic algebras}
\label{bi-symp}

\subsection{Noncommutative differential forms}
Given an $A$-bimodule $M$, a \emph{derivation} of $A$ into $M$ is an
additive map $\theta\colon A\to M$ satisfying the Leibniz rule
$\theta(ab) =(\theta a)b+a(\theta b)$ for all $a,b\in A$.
The space $\Der(A,M)$ of
$\kk$-linear derivations of $A$ into $M$ is a $Z(A^\e)$-module.

If we denote by $m\colon A\otimes A\to A$ the multiplication map of $A$, we take $\Omega^1_{\nc} A$ as the kernel of $m$ (seen as a sub-$A$-bimodule of $A\otimes A$), and the derivation 
\begin{equation}
\du\,\colon A\lto \Omega^1_{\nc} A\colon \quad a\longmapsto   \du a=1\otimes a-a\otimes 1
\label{univ-derivation}
\end{equation}
is called the \emph{universal derivation}. Similarly to the commutative case, the pair $(\Omega^1_{\nc}A,\du\,)$ satisfies the following universal property:

\begin{lemma}[\cite{CQ95}, \S 2]
There exists a unique pair
$(\Omega^1_{\nc}A,\du\,)$ (up to isomorphism), where $\Omega^1_{\nc} A$ is an
$A$-bimodule and $\du\,\colon A\to\Omega^1_{\nc} A$ is a $\kk$-linear
derivation $\theta\colon A\to M$, satisfying the following universal
property: for all pairs $(M,\theta)$ consisting of an $A$-bimodule $M$
and a $\kk$-linear derivation, there exists a unique $A$-bimodule
morphism $i_\theta\colon\Omega^1_{\nc} A\to M$ such that
$\theta=i_\theta\circ\du\,$. 
\label{Univ-Property-Kaehler-diff}
\end{lemma}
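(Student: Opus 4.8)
The plan is to verify by hand that the explicit pair $(\Omega^1_{\nc}A,\du\,)$ of \eqref{univ-derivation} satisfies the asserted universal property, and then to obtain its uniqueness by the standard formal argument. Before anything else I would isolate two elementary observations about $\Omega^1_{\nc}A=\ker(m\colon A\otimes A\to A)$. First, $\du$ genuinely is a $\kk$-linear derivation landing in $\ker m$: one has $m(1\otimes a-a\otimes1)=a-a=0$, and expanding $1\otimes ab-ab\otimes1$ against $(\du a)b+a(\du b)$ gives equality in two lines. Second — and this is the point everything rests on — $\Omega^1_{\nc}A$ is generated \emph{as a left $A$-module} by the image of $\du$: from the identity $a\,\du b=a\otimes b-ab\otimes1$ we get $a\otimes b=a\,\du b+ab\otimes1$, so any $\omega=\sum_i a_i\otimes b_i$ with $\sum_i a_ib_i=0$ is already equal to $\sum_i a_i\,\du b_i$.

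For existence, given an $A$-bimodule $M$ and a $\kk$-linear derivation $\theta\colon A\to M$, I would introduce the $\kk$-linear map $\mu_\theta\colon A\otimes A\to M$ sending $a\otimes b\mapsto a\,\theta(b)$ — this is well defined, being $\Id_A\otimes\theta$ followed by the left action $A\otimes M\to M$ — and then set $i_\theta\defeq\mu_\theta|_{\Omega^1_{\nc}A}$. Left $A$-linearity of $i_\theta$ is immediate. Right $A$-linearity is \emph{false} on all of $A\otimes A$, but it becomes true after restriction to $\ker m$: for $\omega=\sum_i a_i\otimes b_i\in\ker m$ the Leibniz rule gives $\mu_\theta(\omega\cdot b')=\sum_i a_i\theta(b_i)b'+\big(\sum_i a_ib_i\big)\theta(b')=\mu_\theta(\omega)\,b'$, the obstructing term being exactly $m(\omega)\theta(b')=0$. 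Finally $i_\theta(\du a)=1\cdot\theta(a)-a\,\theta(1)=\theta(a)$, since $\theta(1)=0$. Thus $i_\theta$ is an $A$-bimodule morphism with $\theta=i_\theta\circ\du\,$. Uniqueness of $i_\theta$ then follows from the second observation above: an $A$-bimodule morphism out of $\Omega^1_{\nc}A$ is in particular left $A$-linear, hence determined by its values on the left-module generators $\du(A)$, on which any such morphism must agree with $\theta$.

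For uniqueness of the pair up to isomorphism I would run the usual argument. If $(\Omega,d)$ satisfies the same universal property, applying each universal property to the other pair yields $A$-bimodule morphisms $\phi\colon\Omega^1_{\nc}A\to\Omega$ with $d=\phi\circ\du$ and $\psi\colon\Omega\to\Omega^1_{\nc}A$ with $\du=\psi\circ d$. Then $\psi\circ\phi$ and $\Id_{\Omega^1_{\nc}A}$ both mediate $\du$ through $\du\,$, so the uniqueness clause applied to $(M,\theta)=(\Omega^1_{\nc}A,\du\,)$ forces $\psi\circ\phi=\Id$, and symmetrically $\phi\circ\psi=\Id$; hence $\phi$ is an isomorphism intertwining $\du$ and $d$.

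The one genuinely non-formal point in the whole proof is the mechanism behind existence and uniqueness of $i_\theta$, namely that passing to $\ker m$ is precisely what repairs the right $A$-linearity of $\mu_\theta$, and dually that $\du(A)$ generates $\Omega^1_{\nc}A$ as a left $A$-module. I would therefore state and prove the second observation carefully at the outset, since both halves of the lemma are immediate consequences once it is in hand; the remaining verifications (the Leibniz computation for $\du\,$, left-linearity of $i_\theta$, $\theta(1)=0$) are routine.
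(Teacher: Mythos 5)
Your proof is correct and complete. The paper itself gives no argument for this lemma — it simply constructs $\Omega^1_{\nc}A$ as $\ker(m)$ with $\du a=1\otimes a-a\otimes1$ and defers the verification to Cuntz--Quillen \cite{CQ95} — so there is nothing to compare against beyond that construction, which is exactly the one you verify. Your two key points (that restriction to $\ker m$ is what restores right $A$-linearity of $a\otimes b\mapsto a\,\theta(b)$, and that $\du(A)$ generates $\Omega^1_{\nc}A$ already as a \emph{left} $A$-module, which is slightly stronger than the bimodule generation the paper remarks on afterwards) are precisely the standard argument, and the formal uniqueness-of-universal-objects step is handled correctly.
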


If $a,b\in A$, it will be useful to note that $\Omega^1_{\nc}A$ is generated as an $A$-bimodule by the symbols $\du a$ subject to the usual relations: $\du\,(ab)=a(\du b)+(\du a)b$ and linearity.\\


The algebra of \emph{non-commutative differential forms} of $A$ is the tensor algebra
\begin{equation}
\Omega^\bullet_{\nc} A:=T_A\Omega^1_{\nc} A
\label{NC-diff-forms}
\end{equation}
of the $A$-bimodule $\Omega^1_{\nc} A$ if $n\geq 0$, and $\Omega^n_R A=0$ if $n<0$; by convention $\Omega^0_{\nc}A=A$. 
Also, every homogeneous element $\beta$ in $\Omega^\bullet_{\nc}A$ has a representation $a\du a_1\cdots\du a_n$ ($a,a_1,...,a_n\in A$).
Extending \eqref{univ-derivation} by the Leibniz rule, $\Omega^\bullet_{\nc} A$ becomes a differential graded algebra (DG-algebra for short).
There exists a natural inclusion map $i_{\nc}\,\colon A\hookrightarrow\Omega^\bullet_{\nc}A$, and the DG-$A$-algebra $\Omega^\bullet_{\nc}A$ satisfies the following universal property:
\begin{lemma}[\cite{CQ95}]
Let $A$ be a $\kk$-algebra. For any DG-algebra $C$ and algebra morphism $g\,\colon A\to C^0$ into the zero-degree component of $C$, there exists a unique morphism of DG-algebras $h\,\colon\Omega^{\bullet}_{\nc}A\to C$ making the diagram commute:
\[
\xymatrix{
A\ar[dr]_-g \ar@{^{(}->}[r]^-{i_{\nc}} & \Omega^\bullet_{\nc}A\ar@{.>}[d]^-h
\\
& C
}
\]
\label{dg-univ-property}
\end{lemma}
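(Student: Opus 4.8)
The plan is to construct $h$ in two steps, first as a morphism of graded algebras and then verifying compatibility with the differentials, using the two universal properties already available: that of $\Omega^1_{\nc}A$ (Lemma \ref{Univ-Property-Kaehler-diff}) and that of the tensor algebra $\Omega^\bullet_{\nc}A=T_A\Omega^1_{\nc}A$.

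First I would settle degree one. Write $d_C$ for the differential of $C$ and regard the component $C^1$ as an $A$-bimodule via $g$, i.e. $a\cdot c\cdot b:=g(a)\,c\,g(b)$; this is legitimate because $C^0$ is a unital subalgebra of $C$ and $C^0\cdot C^1\cdot C^0\subseteq C^1$. The map $\delta\colon A\to C^1$, $\delta(a):=d_C(g(a))$, is a $\kk$-linear derivation: since $g$ is an algebra morphism into degree zero, the graded Leibniz rule in $C$ gives $\delta(ab)=d_C(g(a)g(b))=d_C(g(a))g(b)+g(a)d_C(g(b))=\delta(a)\cdot b+a\cdot\delta(b)$, the sign being trivial as $g(a)$ has degree $0$. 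By Lemma \ref{Univ-Property-Kaehler-diff} there is a unique $A$-bimodule map $\phi\colon\Omega^1_{\nc}A\to C^1$ with $\phi\circ\du=\delta$; composing with $C^1\hookrightarrow C$ we view $\phi$ as an $A$-bimodule map into $C$ with its $g$-induced bimodule structure. The pair $(g,\phi)$ then determines, by the universal property of the tensor algebra, a unique algebra morphism $h\colon\Omega^\bullet_{\nc}A\to C$ restricting to $g$ on $\Omega^0_{\nc}A=A$ and to $\phi$ on $\Omega^1_{\nc}A$; explicitly $h(a_0\du a_1\cdots\du a_n)=g(a_0)\,d_C(g(a_1))\cdots d_C(g(a_n))$. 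Because each $\phi(\du a_i)$ lies in $C^1$, this $h$ maps $\Omega^n_{\nc}A$ into $C^n$, so it is a morphism of graded algebras, and $h\circ i_{\nc}=g$, making the triangle commute.

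It then remains to check that $h$ intertwines the differentials, $h\circ D=d_C\circ h$, where $D$ denotes the differential of $\Omega^\bullet_{\nc}A$ (the Leibniz extension of $\du$, so $Da=\du a$ for $a\in A$ and $D^2=0$). I would look at the set $S$ of $\beta\in\Omega^\bullet_{\nc}A$ with $h(D\beta)=d_C(h\beta)$ and show $S$ is everything. It contains $A$, since $h(Da)=\phi(\du a)=\delta(a)=d_C(g(a))=d_C(ha)$; it contains each $\du a$, since $D(\du a)=D^2a=0$ while $d_C(h(\du a))=d_C^2(g(a))=0$; and it is closed under sums and products, because for $\beta,\gamma\in S$ the graded Leibniz rules on both sides together with the fact that $h$ is a morphism of graded algebras give $h(D(\beta\gamma))=h(D\beta)h\gamma+(-1)^{|\beta|}h\beta\,h(D\gamma)=d_C(h\beta)h\gamma+(-1)^{|\beta|}h\beta\,d_C(h\gamma)=d_C(h(\beta\gamma))$. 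Since $\Omega^\bullet_{\nc}A$ is generated as a $\kk$-algebra by $A$ and the symbols $\du a$ ($a\in A$), it follows that $S=\Omega^\bullet_{\nc}A$. Uniqueness is then forced: any DG-algebra morphism $h'$ with $h'\circ i_{\nc}=g$ satisfies $h'(a)=g(a)$ and $h'(\du a)=h'(Da)=d_C(g(a))$, so $h'$ agrees with $h$ on algebra generators and hence $h'=h$. I expect the only points requiring care to be the bookkeeping of the $g$-induced $A$-bimodule structure when invoking Lemma \ref{Univ-Property-Kaehler-diff} and the sign conventions in the graded Leibniz rule; the rest is formal.
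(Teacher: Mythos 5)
Your proof is correct, and it is essentially the canonical argument: the paper itself does not supply a proof of this lemma, deferring to \cite{CQ95}, and your two-step construction (extend $a\mapsto d_C(g(a))$ to a bimodule map $\Omega^1_{\nc}A\to C^1$ via Lemma \ref{Univ-Property-Kaehler-diff}, then to an algebra map via the tensor-algebra universal property of $\Omega^\bullet_{\nc}A=T_A\Omega^1_{\nc}A$, and finally check compatibility with the differentials on generators) is exactly the standard one. The only point worth flagging is cosmetic: in the closure-under-products step of your set $S$ the graded Leibniz sign $(-1)^{|\beta|}$ presupposes $\beta$ homogeneous, so one should phrase that step for homogeneous elements and then conclude by linearity, which your closure under sums already provides.
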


A key technical point in non-commutative algebraic geometry is that $(\Omega^\bullet_{\nc} A,\du\,)$ has trivial cohomology (see \cite{Gi05}, \S 11.4). To obtain a more interesting theory, we define the \emph{non-commutative Karoubi--de Rham complex} of $A$ as the graded vector space
\begin{equation}
\DDR{\bullet} {A}=\Omega^\bullet_{\nc} A/[\Omega^\bullet_{\nc} A,\Omega^\bullet_{\nc} A],
\label{Karoubi-de-Rham}
\end{equation}
where $[-,-]$ denotes the graded commutator.
Note that the differential $\du\,\colon \Omega^\bullet_{\nc} A\to \Omega^{\bullet+1}_{\nc} A$ descends to a well-defined differential $\du\,\colon \DDR{\bullet}{A}\to \DDR{\bullet+1}{A}$; so $\DDR{\bullet}{A}$ becomes a differential graded vector space.\\

\subsection{Double derivations}
 To study associative algebras as if they were algebras of functions on a noncommutative space, we will use an analogue of vector fields in this context. It is well known that a regular vector field on a smooth affine algebraic variety $X$ is equivalent to a derivation $\kk[X] \to \kk[X]$ of the coordinate ring of $X$, i.e., derivations of a commutative algebra $C$ play the role of vector fields. It has been commonly accepted until recently that this point of view applies to noncommutative algebras $A$ as well. Nevertheless, Crawley-Boevey \cite{CB99} showed that for a smooth affine curve $X$ with coordinate ring $A := \kk[X]$, the algebra of differential operators on $X$ can be constructed by means of \emph{double derivations}. To define them, it will be crucial to reformulate the universal property expressed by Proposition \ref{Univ-Property-Kaehler-diff} by saying that the $A$-bimodule
$\Omega^1_{\nc}A$ represents the functor $\Der(A,-)$ from the category
of $A$-bimodules into the category of $\kk$-modules. Hence there
exists a canonical isomorphism of $A$-bimodules
\begin{equation}\label{sub:ncdiffforms.corep}
\Der(A,M)\lra{\cong}\Hom_{A^\e}(\Omega^1_{\nc}A,M)\colon\quad \theta\longmapsto i_\theta,
\end{equation}
whose inverse map is given by
$i_\theta\longmapsto\theta=i_\theta\circ\du\,$. In particular, since
$i_\theta$ is an $A^\e$-module morphism, $i_\theta(a\du b)=a
\theta (b)$, for all $a\in A^\e, b\in A$.\\

Dualizing the $A$-bimodule $\Omega^1_{\nc}A$, we obtain another $A$-bimodule
\begin{equation}
(\Omega^1_{\nc}A)^\vee=\Hom_{A^\e}(\Omega^1_{\nc}A,{}_{A^\e}A^\e)=\Hom_{A^\e}(\Omega^1_{\nc}A,(A\otimes A)_\out),
\label{double canonical isomorphism}
\end{equation}
where the $A$-bimodule structure
comes from the inner $A$-bimodule
structure. By the universal
property~\eqref{sub:ncdiffforms.corep} of $\Omega^1_{\nc}A$, we have a
canonical isomorphism
\begin{equation}\label{eq:double-dual-1}
\D{A}\lra{\cong}(\Omega^1_{\nc}A)^\vee, \quad
\Theta\longmapsto i_\Theta,
\end{equation}
where
\begin{equation}
\D{A}\defeq\D(A,{}_{A^\e}A^\e)=\Der_R(A,(A\otimes A)_\out)
\label{def-doble-deriv}
\end{equation}
is the $A$-bimodule of \emph{double derivations}, whose $A$-bimodule
structure also comes from the inner $A$-bimodule structure.




\subsection{Smoothness}
\label{smoothness}
Note that if the associative algebra $A$ is finitely generated over $\kk$, then
$\Omega^1_{\nc}A$ is a finitely generated
$A^\e$-module. So, the associative $\kk$-algebra $A$ is called \emph{smooth} over $\kk$ if it
is finitely generated over $\kk$ and the $A^\e$-module $\Omega^1_{\nc}A$ is projective. 



\subsection{The (reduced) contraction operator}
Fix $\Theta\in\D A$. From \eqref{sub:ncdiffforms.corep}, taking $M=A\otimes A$, for any 1-form $\alpha\in\Omega^1_{\nc} A$, we have the following $A$-bimodule map, called the \emph{contraction operator}
\begin{equation}
i_\Theta\colon\Omega^1_{\nc} A\lto A\otimes A\colon\quad \alpha\longmapsto i_\Theta\alpha=i^\prime_\Theta\alpha\otimes i^{\prime\prime}_\Theta\alpha.
\label{contraction-operator-double-deriv}
\end{equation}

\begin{remark}
From now on, we will systematically use symbolic Sweedler's notation and we will omit the summation sign for an element in the tensor product. Similarly, we write the map $\Theta\colon A\to A\otimes A$ as $a\mapsto \Theta^\prime(a)\otimes \Theta^{\prime\prime}(a)$.
\end{remark}

Note that on generators, $i_\Theta$ acts as $i_\Theta(a)=0$, and $i_\Theta(\du b)=\Theta(b)$ for all $ a\in A $, $\du b\in\Omega^1_{\nc}A$. 
Next, since $\Omega^{\bullet}_{\nc}A$ in \eqref{NC-diff-forms} is the free algebra of the graded bimodule $\Omega^1_{\nc}A$, there exists a unique extension of the map $i_{\Theta}\,\colon\Omega^1_{\nc}A\to A\otimes A$ to a double derivation of degree -1 on $\T_A(\Omega^1_{\nc}A)$, 
\begin{equation}
i_{\Theta} \; \colon \Omega^{\bullet}_{\nc} A\lto\bigoplus (\Omega^{i}_{\nc} A \otimes \Omega^{j}_{\nc} A),
\label{extension-contraccion}
\end{equation}
 where the sum is over pairs $(i,j)$ with $i+j=\bullet-1$. Note that we regard $\Omega^{\bullet}_{\nc}A\otimes\Omega^{\bullet}_{\nc}A$ as an $\Omega^{\bullet}_{\nc}A$-bimodule with respect to the outer bimodule structure (see \eqref{outer}). 
 Moreover, sometimes, we will view the contraction map $i_\Theta$ as a map $\Omega^{\bullet}_{\nc}A\to (T_A(\Omega^\bullet_{\nc}A))^{\otimes 2}$. 
 Explicitly (see \cite{CBEG07} (2.6.2)), for any $n=1,2,...$, and $\alpha_1,...,\alpha_n\in\Omega^1_{\nc} A$, we write:
 \begin{equation}
 i_\Theta(\alpha_1\alpha_2\cdots\alpha_n)=\sum_{1\leq k\leq n}(-1)^{k-1}(\alpha_1\cdots\alpha_{k-1}(i^\prime_\Theta\alpha_k))\otimes((i^{\prime\prime}_\Theta\alpha_k)\alpha_{k+1}\cdots\alpha_n).
 \label{formula-larga-contraccion}
 \end{equation}
 
 Now, given a graded $\kk$-algebra $C$ and $c=c_1\otimes c_2$, with $c_1,c_2\in C$, we define ${}^\circ c\defeq(-1)^{\lvert c_1\rvert\lvert c_2\rvert} c_2 c_1$, and given a linear map $\phi\colon C\to  C^{\otimes 2}$, write ${}^\circ \phi\,\colon C\to C\colon$ $  c\mapsto {}^\circ (\phi(c))$. 
In our case, we set $C=\Omega^\bullet_{\nc} A$ and we define the \emph{reduced contraction operator}
 \begin{equation}
\iota_\Theta \colon \Omega^\bullet_{\nc} A \lto \Omega^\bullet_{\nc} A\colon\quad 
 \alpha \longmapsto {}^\circ(i_\Theta)= (-1)^{\lvert i^{\prime}_\Theta(\alpha)\rvert\lvert i^{\prime\prime}_\Theta(\alpha)\rvert}i^{\prime\prime}_\Theta (\alpha) i^\prime_\Theta(\alpha).
\label{circulo derecha}
\end{equation}
Explicitly, for any $\alpha_1,\alpha_2,...,\alpha_n\in\Omega^1_R A$, using the definition of $i_\Theta$, we have
\begin{equation}
\iota_\Theta(\alpha_1\cdots\alpha_n)=\sum^n_{k=1}(-1)^{(k-1)(n-k+1)}(i^{\prime\prime}_\Theta\alpha_k)\alpha_{k+1}\cdots\alpha_n\alpha_1\cdots\alpha_{k-1}(i^\prime_\Theta\alpha_k).
\label{reduced-contraction-general}
\end{equation}

\subsection{Bi-symplectic algebras}
\begin{definition}[\cite{CBEG07}]
Let $A$ be an associative $\kk$-algebra. An element $\omega\in 
\DDR{2}(A)$ which is closed for the universal derivation $\du$ is a
\emph{bi-symplectic form} if the following map of $A$-bimodules is an isomorphism: 
\[
\iota(\omega)\colon\D A\lra{\cong} \Omega^{1}_{\nc}A\colon \quad
\Theta \longmapsto \iota_{\Theta}\omega.
\]
\label{bi-sympldef}
\end{definition}

\section{The scheme of representations}
\label{scheme-repr}

\subsection{A first description of $\Rep(A,V)$}
\label{sub:first-repr}
Very roughly speaking, representation theory deals with symmetry in linear spaces. So, it is not surprising that it may be applied in branches as geometry, probability, quantum mechanics or quantum field theory. In algebra, a central topic is the study of representations of associative algebras.\\

From now on, we fix a $\kk$-vector space $V$ of finite dimension $N$. A \emph{representation} of $A$ consists of a vector space $V$ together with a homomorphism of algebras $\rho\,\colon A\to \End V$. The associated representation space of $A$ parametrizing its representation is defined as
\[
\Rep(A,V)=\Hom(A,\End V).
\]
It is easy to see that $\Rep(A,V)$ is an affine variety and its coordinate ring 
\[
A_V:=\kk[\Rep(A,V)]
\]
may be conveniently described since it is generated by symbols $(a_{ij})_{i,j=1,...,N}$ for all $a\in A$, subject to the relations (see \cite{VdB08a})
\[
(\lambda a)_{ij}=\lambda a_{ij},\quad 1_{jl}a_{j^\prime l^\prime}=\delta_{l j^\prime}a_{j^\prime l^\prime}, \quad a_{jl}+b_{jl}=(a+b)_{jl}, \quad (ab)_{ij}=a_{il}b_{lj},
\]
where $\lambda\in\kk$ and we sum over repeated indices.

\begin{example}
Let $F=\kk\langle x_1,...,x_d\rangle$ be the free associative algebra in $d$ generators. Then any $N$-dimensional representation of $A$ is determined by declaring that the $N\times N$-matrix $X_i$ is the image of the generator $x_i$ for every $i=1,...,d$. Then
\[
\Rep(F,V)=\Mat_{N}(\kk)\oplus\cdots\oplus\Mat_N(\kk)=\Mat_N(\kk)^{\oplus d}.
\]
Then $\kk[\Rep(F,V)]$ is isomorphic to the polynomial ring over $\kk$ generated by the $dN^2$ indeterminates $\{x_{i,jk}\}_{i=1,...,d,\, j,k=1,...,N}$ representing each entry in a generic $d$-tupla of $N\times N$-matrices:
\[
X_1=\begin{pmatrix}
x_{1,11} & \cdots & x_{1,1N}
\\
\vdots & \ddots & \vdots
\\
x_{1,N1} & \cdots & x_{1,NN}
\end{pmatrix},
\quad \cdots,\quad 
X_d=\begin{pmatrix}
x_{d,11} & \cdots & x_{d,1N}
\\
\vdots & \ddots & \vdots
\\
x_{d,N1} & \cdots & x_{d,NN}
\end{pmatrix}.
\]
Furthermore,  if $A$ is finitely generated, there exists a natural number $d\in\mathbb{N}$ such that $A$ may be presented as a quotient
\[
A\simeq F/I
\]
of $F$ by a two-sided ideal $I$. Then an $N$-dimensional representation of $A$ can be specified by a $d$-tuple of $N\times N$ matrices $(X_1,...,X_d)$ such that the map $\kk\langle x_1,...,x_d\rangle\to\Mat_N(\kk)$ defined by $x_i\mapsto X_i$ descends to the quotient, which is true if and only if the matrices $(X_1,...,X_d)$ satisfy every relation determined by the ideal $I$.
\end{example}

From this example it is easy to see that if $A$ is finitely generated, then $\kk[\Rep(A,V)]$ is finitely generated as well.\\

Nevertheless, this perspective on $\kk[\Rep(A,V)]$ despite of being very explicit, it is very uneconomical and it hides the rich algebraic and geometric structure carried by $\Rep(A,V)$.

\subsection{$\Rep( A,V)$ as a representing object}
Using ideas of noncommutative algebra \cite{Be74, Co79, LBW02}, the representation space $\Rep(A,V)$ can be defined in terms of a functor on the category of commutative algebras
\begin{equation}
\Rep_V A\,\colon\texttt{CommAlg}_\kk\lto\texttt{Sets}\colon\quad C\mapsto\Hom_{\texttt{Alg}_\kk}(A,\End V\otimes C).
\label{Rep-functor-comm}
\end{equation}
Following \cite{BKR13}, to prove its representability, the idea is to extend \eqref{Rep-functor-comm} from $\texttt{CommAlg}_\kk$ to the category of all associative $\kk$-algebras:
\begin{equation}
\xymatrix{
\texttt{CommAlg}_\kk \ar[rr]^-{\Rep_V A} \ar@{^{(}->}[d]& & \texttt{Sets}
\\
\texttt{Alg}_\kk\ar@{.>}[urr]_-{\widetilde{\Rep}_V A}
}
\label{diagrama-1}
\end{equation}
The functor $\widetilde{\Rep}_V A$ is defined by the same formula as $\Rep_V A$ in \eqref{Rep-functor-comm}, but the commutative algebra $C$ is replaced by an associative algebra $B$:
\begin{equation}
\widetilde{\Rep}_V A\,\colon\texttt{Alg}_\kk\lto\texttt{Sets}\colon\quad B\mapsto\Hom_{\texttt{Alg}_\kk}(A,\End V\otimes B).
\label{Rep-functor-assoc}
\end{equation}
This functor is representable since its representing object has a very explicit algebraic presentation. Let $A*\End V$ be the free product of $A$ and $\End V$ as $\kk$-algebras (i.e. the coproduct in the category $\texttt{Alg}_k$), and $(-)^{\End V}$ denotes the centralizer of the image of $\End V$ in $A$:
\[
(A*\End V)^{\End V}:=\{v\in A*\End V\mid [v,\varphi]=0, \text{ for all } \varphi\in \End V\}.
\]
Then we define the functor
\begin{equation}
\sqrt[V]{-}\,\colon\texttt{Alg}_\kk\lto \texttt{Alg}_\kk\,\colon A\mapsto (\End V*A)^{\End V}.
\label{functor-sqrt}
\end{equation}


In \cite{Co79}, $\sqrt[V]{A}$ is thought of as the coordinate ring of a noncommutative affine scheme. Indeed, 

\begin{lemma}[\cite{BKR13}, Lemma 2.1]
Let $A, B\in\texttt{Alg}_\kk$. The natural functor $\texttt{Alg}_\kk\to\texttt{Alg}_{\End V}$, $B\mapsto \End V\otimes B$ is an equivalence the categories, where $\End V\otimes B$ is regarded as an object in $\texttt{Alg}_\kk$ using the canonical map $\End V\to \End V\otimes B$.
\end{lemma}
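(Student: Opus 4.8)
The plan is to exhibit an explicit quasi-inverse functor and verify it is inverse to $B \mapsto \End V \otimes B$ on both sides, up to natural isomorphism. Since $\End V \cong \Mat_N(\kk)$ with $N = \dim V$, and $\Mat_N(\kk)$ is (classically) Morita-equivalent to $\kk$, the natural candidate for the quasi-inverse is the functor sending an $\End V$-algebra $R$ (that is, an associative $\kk$-algebra $R$ equipped with a $\kk$-algebra map $\End V \to R$) to the \emph{centralizer} $R^{\End V} = \{ r \in R \mid [r,\varphi] = 0 \text{ for all } \varphi \in \End V \}$. First I would check this is well-defined: $R^{\End V}$ is a $\kk$-subalgebra of $R$, and a morphism of $\End V$-algebras $R \to R'$ restricts to a $\kk$-algebra map $R^{\End V} \to (R')^{\End V}$, so $(-)^{\End V}$ is a functor $\texttt{Alg}_{\End V} \to \texttt{Alg}_\kk$.

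Next I would construct the two natural isomorphisms. For one composite, starting from $B \in \texttt{Alg}_\kk$, form $\End V \otimes B$ as an $\End V$-algebra via $\varphi \mapsto \varphi \otimes 1$; then $(\End V \otimes B)^{\End V} = \{ x \in \End V \otimes B \mid x \text{ commutes with } \End V \otimes 1 \}$. The key computation is that this centralizer equals $1 \otimes B$: since $\End V$ is a central simple $\kk$-algebra, the centralizer of $\End V \otimes 1$ inside $\End V \otimes B$ is $Z(\End V) \otimes B = \kk \otimes B \cong B$ — this is exactly the standard "centralizer of a factor in a tensor product" fact for matrix algebras, which one can prove directly by writing elements in terms of the matrix units $e_{ij}$ of $\End V$ and imposing commutation with all $e_{ij}$. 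This gives a natural isomorphism $(\End V \otimes B)^{\End V} \cong B$ in $\texttt{Alg}_\kk$. For the other composite, starting from an $\End V$-algebra $R$, I would use the matrix units $e_{ij}$ of $\End V \subseteq R$ to build the isomorphism $\End V \otimes R^{\End V} \xrightarrow{\ \cong\ } R$, $e_{ij} \otimes r \mapsto e_{ij} r$; the inverse sends $r \mapsto \sum_{i,j} e_{ij} \otimes \big(\sum_k e_{ki}\, r\, e_{jk}\big)$, where one checks $\sum_k e_{ki} r e_{jk} \in R^{\End V}$ and that the two maps are mutually inverse $\End V$-algebra homomorphisms, naturally in $R$. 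This is the classical Morita/Eilenberg–Watts computation for $\Mat_N$, just phrased at the level of algebras rather than modules.

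The main obstacle — or rather the only place requiring genuine care — is the bookkeeping with matrix units: verifying that the proposed inverse maps land in the correct subspaces ($R^{\End V}$ on one side, $1\otimes B$ on the other), that they respect multiplication and the unit, and that they are compatible with the $\End V$-action and with morphisms. All of this rests on the orthogonal-idempotent identities $e_{ij}e_{kl} = \delta_{jk} e_{il}$ and $\sum_i e_{ii} = 1$, together with $\kk$-bilinearity of everything in sight. Once those identities are in place the verifications are routine but must be done in the right order: establish the functor $(-)^{\End V}$, then the two natural isomorphisms, then conclude that $B \mapsto \End V \otimes B$ is an equivalence with quasi-inverse $(-)^{\End V}$. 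I would remark in passing that this is precisely the reason the functor $\sqrt[V]{-}$ of \eqref{functor-sqrt} is the right object to consider: $\sqrt[V]{A} = (\End V * A)^{\End V}$ is the centralizer construction applied to the universal $\End V$-algebra built from $A$, and the present lemma is what lets one transfer representability questions between $\texttt{Alg}_\kk$ and $\texttt{Alg}_{\End V}$.
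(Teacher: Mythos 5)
Your proof is correct and is essentially the standard argument that the cited reference \cite{BKR13} relies on: the quasi-inverse is the centralizer functor $(-)^{\End V}$, with the two natural isomorphisms verified via the matrix units of $\End V\cong\Mat_N(\kk)$, which is also exactly the construction underlying the paper's definition of $\sqrt[V]{A}=(\End V * A)^{\End V}$. The paper itself gives no proof (it only cites \cite{BKR13}, Lemma 2.1), so there is nothing to contrast; your write-up fills that gap correctly.
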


This is the key ingredient to prove the following important result:

\begin{proposition}[\cite{Co79}, \cite{BKR13}, Proposition 2.1(a)]
Let $A, B\in\texttt{Alg}_\kk$
\begin{enumerate}
\item [\textup{(i)}]
There exists a natural bijection
\[
\Hom_{\texttt{Alg}_\kk}(\sqrt[V]{A},B)\simeq \Hom_{\texttt{Alg}_\kk}(A,\End V\otimes B);
\]
\item [\textup{(ii)}]
The functor \eqref{functor-sqrt} is representable, with representing object $\sqrt[V]{A}$.
\end{enumerate}
\label{prop-repr-sqrt}
\end{proposition}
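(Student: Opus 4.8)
The plan is to establish part (i) — the adjunction bijection — by exhibiting explicitly the unit and counit maps, and then to deduce (ii) as a formal consequence. First I would construct, for each $A \in \texttt{Alg}_\kk$, a canonical algebra homomorphism $\pi_A \colon A \to \End V \otimes \sqrt[V]{A}$. The key observation is that by the defining centralizer property, the inclusion $A \hookrightarrow \End V * A$ lands, after conjugating through the equivalence of the preceding Lemma (\cite{BKR13}, Lemma 2.1), in the object $\End V \otimes (\End V * A)^{\End V} = \End V \otimes \sqrt[V]{A}$; more precisely, the Lemma gives an equivalence $\texttt{Alg}_\kk \simeq \texttt{Alg}_{\End V}$ whose inverse sends an $\End V$-algebra $S$ to its centralizer $S^{\End V}$, and applied to $S = \End V * A$ this produces the natural map $\pi_A$. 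Unwinding: writing elements of $\End V * A$ using matrix units $e_{ij}$ of $\End V$, every $a \in A$ decomposes as $a = \sum_{i,j} e_{ij} \otimes a_{ij}$ with $a_{ij} \in \sqrt[V]{A}$, and $\pi_A(a) = \sum_{ij} e_{ij}\otimes a_{ij}$.

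Next I would define, for a homomorphism $f \colon \sqrt[V]{A} \to B$, the assignment $f \mapsto (\Id_{\End V} \otimes f)\circ \pi_A$, which visibly lands in $\Hom_{\texttt{Alg}_\kk}(A, \End V \otimes B)$. For the inverse direction, given $g \colon A \to \End V \otimes B$, note that $\End V \otimes B$ carries a canonical copy of $\End V$ (via $\varphi \mapsto \varphi \otimes 1$), so $g$ together with this copy induces, by the universal property of the free product, a homomorphism $\End V * A \to \End V \otimes B$; restricting to centralizers of $\End V$ on both sides — on the target the centralizer of $\End V \otimes 1$ in $\End V \otimes B$ is precisely $1 \otimes B \cong B$ — yields the desired map $\sqrt[V]{A} \to B$. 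I would then check that these two constructions are mutually inverse and natural in both $A$ and $B$; this is a diagram chase using the universal property of $*$ and the explicit form of the centralizers, with no real surprises once the matrix-unit bookkeeping is set up.

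Granting (i), part (ii) is immediate: the bijection of (i) is natural in $B$, so it exhibits $\Hom_{\texttt{Alg}_\kk}(\sqrt[V]{A}, -)$ as naturally isomorphic to the functor $B \mapsto \Hom_{\texttt{Alg}_\kk}(A, \End V \otimes B) = \widetilde{\Rep}_V A(B)$ from \eqref{Rep-functor-assoc}; hence $\widetilde{\Rep}_V A$ is representable with representing object $\sqrt[V]{A}$, and restricting along the inclusion $\texttt{CommAlg}_\kk \hookrightarrow \texttt{Alg}_\kk$ of diagram \eqref{diagrama-1} recovers representability of $\Rep_V A$ as well (the relevant commutative quotient being the abelianization, but for the stated proposition only the associative statement is needed).

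The main obstacle I anticipate is purely the identification of the centralizer of $\End V \otimes 1$ inside $\End V \otimes B$ with $1 \otimes B$: this is where one genuinely uses that $\End V \cong \Mat_N(\kk)$ is a central simple algebra, so that the commutant of $\Mat_N(\kk) \otimes 1$ in $\Mat_N(\kk)\otimes B$ is exactly $1 \otimes B$ by the double centralizer argument over a field. Everything else — the decomposition $a = \sum e_{ij}\otimes a_{ij}$, the fact that the $a_{ij}$ generate $\sqrt[V]{A}$ and satisfy precisely the relations listed in \S\ref{sub:first-repr}, and the compatibility checks — is routine once this commutant computation is in hand, and indeed it is the same computation that makes the explicit presentation of $A_V$ in \S\ref{sub:first-repr} correct.
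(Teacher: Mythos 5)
Your proposal is correct and follows exactly the route the paper indicates: the note itself gives no proof of Proposition \ref{prop-repr-sqrt}, merely citing \cite{Co79} and \cite{BKR13} and flagging the equivalence $\texttt{Alg}_\kk\simeq\texttt{Alg}_{\End V}$, $B\mapsto\End V\otimes B$, as ``the key ingredient,'' which is precisely the lemma your argument pivots on (together with the coproduct property of $*$ and the elementary computation that the commutant of $\End V\otimes 1$ in $\End V\otimes B$ is $1\otimes B$). The explicit matrix-unit decomposition $a=\sum_{i,j}e_{ij}a_{ij}$ with $a_{ij}=\sum_k e_{ki}\,a\,e_{jk}$ and the mutual-inverse checks are the standard filling-in of that outline, so there is nothing to correct.
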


Then we define the functor
\begin{equation}
(-)_V\colon\,\texttt{Alg}_\kk\lto \texttt{CommAlg}_\kk\colon\quad A\longmapsto (\sqrt[V]{A})_{\ab}
\label{functor-V}
\end{equation}
where $(-)_{\ab}$ is the \emph{abelianization functor}, the left adjoint functor to the inclusion functor $\texttt{CommAlg}_\kk\hookrightarrow \texttt{Alg}_\kk$ in \eqref{diagrama-1}, defined as $C\longmapsto C/[C,C]$, where $[C,C]$ denotes the two-sided ideal generated by $[C,C]$, and $[-,-]$ stands for the commutator. The following important result is immediate from Proposition \ref{prop-repr-sqrt}

\begin{proposition}
Let $A\in \texttt{Alg}_\kk$ and $B\in \texttt{CommAlg}_\kk$. Then
\begin{enumerate}
\item [\textup{(i)}]
There exists a natural bijection
\begin{equation}
\Hom_{\texttt{CommAlg}_\kk}(A_V,B)\simeq \Hom_{\texttt{Alg}_\kk}(A,\End V\otimes B);
\label{adjunction-1}
\end{equation}
\item [\textup{(ii)}]
The commutative algebra $A_V$ represents the functor \eqref{Rep-functor-comm}.
\end{enumerate}
\label{prop-adjunction-repr-sch}
\end{proposition}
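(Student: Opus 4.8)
The plan is to deduce both statements formally, by chaining together two adjunctions that have already been set up.

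For part (i), I would start from the definition \eqref{functor-V}, namely $A_V=(\sqrt[V]{A})_{\ab}$. Since $B$ is commutative and $(-)_{\ab}$ is, by construction, left adjoint to the inclusion $\texttt{CommAlg}_\kk\hookrightarrow\texttt{Alg}_\kk$, every $\kk$-algebra homomorphism $\sqrt[V]{A}\to B$ annihilates the commutator ideal and hence factors uniquely through the quotient $\sqrt[V]{A}\twoheadrightarrow A_V$. This gives a bijection
\[
\Hom_{\texttt{CommAlg}_\kk}(A_V,B)\;\cong\;\Hom_{\texttt{Alg}_\kk}(\sqrt[V]{A},B),
\]
natural in $B$. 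Composing it with the natural bijection of Proposition \ref{prop-repr-sqrt}(i),
\[
\Hom_{\texttt{Alg}_\kk}(\sqrt[V]{A},B)\;\cong\;\Hom_{\texttt{Alg}_\kk}(A,\End V\otimes B)
\]
(where $B$ is now viewed as an object of $\texttt{Alg}_\kk$), yields the desired bijection \eqref{adjunction-1}, with naturality in $B$ inherited from that of each factor.

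For part (ii), I would simply specialize \eqref{adjunction-1} to commutative test algebras: taking $B=C\in\texttt{CommAlg}_\kk$ and comparing with the defining formula \eqref{Rep-functor-comm} of $\Rep_V A$ shows that $\Hom_{\texttt{CommAlg}_\kk}(A_V,-)$ and $\Rep_V A$ coincide as functors $\texttt{CommAlg}_\kk\to\texttt{Sets}$, which is exactly the assertion that $A_V$ is a representing object for $\Rep_V A$.

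There is no genuine obstacle here — the statement is purely formal once Proposition \ref{prop-repr-sqrt} is available, which is why the text calls it immediate. The only points worth spelling out are: that the composite of the two bijections is indeed natural in $B$ (which reduces to the naturality already recorded for each); and, if one wants the \emph{representation scheme} $\Rep(A,V)=\Spec A_V$ rather than just its coordinate ring, that $A_V$ is pinned down up to canonical isomorphism by the functor $\Rep_V A$ via Yoneda's lemma.
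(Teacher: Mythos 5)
Your proposal is correct and is exactly the argument the paper has in mind: the text simply declares the proposition ``immediate from Proposition \ref{prop-repr-sqrt}'', and the intended justification is precisely the composition of the abelianization adjunction for $A_V=(\sqrt[V]{A})_{\ab}$ with the bijection of Proposition \ref{prop-repr-sqrt}(i). You have merely spelled out the steps the paper leaves implicit.
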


If we take $B=A_V$ in Proposition \ref{prop-adjunction-repr-sch}(i), we can consider the identity on the left-hand side, $\Id_{A_V}\,\colon A_V\to A_V$.
Then we define the \emph{universal representation} 
\begin{equation}
 \pi\,\colon A\lto\End V\otimes A_V
 \label{pi}
\end{equation}
in $\Hom_{\texttt{Alg}_\kk}(A,\End V\otimes A_V)$ that corresponds to $\Id_{A_V}$ under the adjunction \eqref{adjunction-1}.
It is universal, which enables us to define the functor $(-)_V$ on morphisms (see \cite{Kh12}, Corollary 6).
Let $f\,\colon A_1\to A_2$ be a morphism of associative algebras, and we consider the diagram
\[
  \xymatrix{
  A_1\ar[r]^-{\pi_1} \ar[d]^-f & \End V\otimes (A_1)_V
  \\
    A_2\ar[r]^-{\pi_2} & \End V\otimes (A_2)_V
    }
  \]
  Applying the universal property of $\pi_1$ to the morphism $\pi_2\circ f\,\colon A_1\to \End V\otimes (A_2)_V$, we obtain a unique morphism
  \[
   g\,\colon (A_1)_V\lto (A_2)_V
  \]
such that $\Id_{\End V}\otimes g$ makes the diagram commute. 
We set $(f)_V:=g$.

\subsection{The $\GL(V)$-action on $\Rep(A,V)$}
Let $\GL(V)\subset\End V$ be the group of invertible endomorphisms of $V$. 
The natural left action by conjugation of $\GL(V)$ on $\End V$ induces a left action on $\End V\otimes A_V $ given by
$g\cdot (\varphi\otimes x)=g\varphi g^{-1}\otimes x$, for all $g\in\GL(V)$, $\varphi\in\End V$, and $x\in A_V$.\\

Representation schemes have become essential in representation theory because they enable the use of geometric methods in the study of the representation theory of the algebra $A$, which constitutes an additional instance of the fruitful geometric interaction between algebra and geometry. \\

The naif idea is to consider the quotient of $\Rep(A,V)$ by $\GL(V)$ and then study its orbit space.
However, this \emph{topological quotient} is badly behaved in most cases; it does not carry a reasonable Hausdorff topology.\\

To overcome this problem, we define a \emph{categorical quotient}
\[
 \Rep(A,V)//\GL(V)=\Spec\left(\kk[\Rep(A,V)]^{\GL(V)}\right),
\]
where $\kk[\Rep(A,V)]^{\GL(V)}$ is the algebra of $\GL(V)$-invariant polynomial functions on $\Rep(A,V)$, and $\Spec(-)$ is the set of its maximal ideals.
Using a theorem due to Hilbert, we can prove that this algebra is finitely generated, so $ \Rep(A,V)//\GL(V)$ is an affine algebraic variety.
Since every $G$-orbit $\mathbb{O}\subset \Rep(A,V)$ defines a maximal ideal $J_{\mathbb{O}}\subset\{f\mid f\vert_{\mathbb{O}}\}\subset\kk[\Rep(A,V)]^{\GL(V)}$;
 we have a natural surjective map $\Rep(A,V)/\GL(V)\to \Rep(A,V)//\GL(V)$.
In fact, there exists an isomorphism as topological spaces (\cite{Kir16}, Theorem 9.5): $\Rep(A,V)//\GL(V)\leftrightarrow \{\text{closed orbits in }\Rep(A,V)\}\colon$ $[x]\mapsto \text{ unique closed orbit contained in }\overline{\mathbb{O}}_x$.\\

The problem is that in categorical quotients a lot of geometric information may be lost.
For instance, given a quiver $Q$ without oriented cycles, $\kk[\Rep(\kk Q,V)]^{\GL(V)}=\kk$, hence, we have $\Rep(\kk Q,V)//\GL(V)=pt$.\\

To avoid this dramatic loss of information, Mumford developed \emph{Geometric Invariant Theory} (see \cite{MFK94,Th06}), which consists of a general theory of quotients by a reductive group action via stability conditions, which will require a switch from affine to projective varieties.
Let $\chi$ be a character of $\GL(V)$, that is, a morphism of algebraic groups $\chi\,\colon\GL(V)\to\kk^\times$, and define 
\[
 \kk[\Rep(A,V)]^{\GL(V),\chi}=\{f\in\kk[\Rep(A,V)]\mid f(g\cdot \rho)=\chi(g)f(\rho)\}
\]
It is immediate from the definition that
\[
 A_\chi=\bigoplus_{n\geq 0}\kk[\Rep(A,V)]^{\GL(V),\chi^n}
\]
is a graded algebra; in fact is finitely-generated. 
Thus, we can define the corresponding quasi-projective variety:
\[
 \Rep(A,V)//_\chi\GL(V)=\Proj(A_\chi)=\Proj\left(\bigoplus_{n\geq 0}\kk[\Rep(A,V)]^{\GL(V),\chi^n}\right).
\]
In general, for $n=0$, we have $\kk[\Rep(A,V)]^{\GL(V),\chi^n}=\kk[\Rep(A,V)]^{\GL(V)}$, is the free algebra of $\GL(V)$-invariants.
Thus, we have a canonical algebra embedding $\kk[\Rep(A,V)]^{\GL(V)}\hookrightarrow A_\chi$ as the degree zero subalgebra.
It is well-known that this embedding induces a projective morphism of varieties $\pi\,\colon \Rep(A,V)//_\chi \GL(V)\to \Rep(A,V)//\GL(V)$.\\

Given a nonzero homogeneous semi-invariant $f\in A_\chi$, we take $(\Rep(A,V))_f:=\{x\in \Rep(A,V)\mid f(x)\neq 0\}$.
In GIT, one is interested in some distinguished subsets:

\begin{definition}
 \begin{enumerate}
  \item [\textup{(i)}]
  A point $x\in\Rep(A,V)$ is called $\chi$\emph{-semistable} if there exists $n\geq 1$ and a $\chi^n$-seminvariant $f\in\kk[\Rep(A,V)]^{\chi^n}$ such that $x\in X_f$.
    \item [\textup{(ii)}]
    A point $x\in\Rep(A,V)$ is called \emph{$\chi$-stable} if there exists $n\geq 1$ and a $\chi^n$-semi-invariant $f\in\kk[\Rep(A,V)]^{\chi^n}$ such that $x\in (\Rep(A,V))_f$ and, in addition, we have:
    \begin{enumerate}
    \item [(a)]
    The action map $\GL(V)\times(\Rep(A,V))_f\to(\Rep(A,V))_f$ is a closed morphism, and
    \item [(b)]
    the isotropy group of the point $x$ is finite.
    \end{enumerate}
 \end{enumerate}
\end{definition}
We write $\Rep^{ss}(A,V)$ (resp. $\Rep^{s}(A,V)$) for the set of semistable (resp. stable) points. 
Note that $\Rep^{s}(A,V)\subset \Rep^{ss}(A,V)\subset\Rep(A,V)$.
Remarkably, the $\GL(V)$-orbit of a stable point is an orbit of maximal dimension, equal to $\dim\GL(V)$;
moreover, such a stable orbit is closed in $\Rep^{ss}(A,V)$.
Finally, King \cite{Ki94} introduced a different, purely algebraic, notion of stability for representations of algebras, showing that in the case of quiver representations, his definition of stability is actually equivalent to Mumford' s.\\




Geometric Invariant Theory may be applied to representation spaces, giving a close link between the representation theory of $A$ and the geometric properties of $\Rep(A,V)$; for instance, there exists a one-to-one correspondence between closed orbits of the $\GL(V)$-action on $\Rep( A,V)$ and semisimple\footnote{A \emph{subrepresentation} \cite{EGHLSVY11} of a representation $V$ is a subspace $U\subset V$ which is invariant under all operators $\rho(a)$, with $a\in A$. A nonzero representation $V$ of $A$ is said to be \emph{irreducible} if its only subrepresentations are 0 and $V$ itself. Then a \emph{semisimple} (or \emph{completely reducible}) representation of $A$ is a direct sum of irreducible representations.} representations of $A$ in $V$.
Also, Artin proved that the orbit of a representation is closed in $\Rep(A,V)$ if and only if the corresponding $A$-module is semisimple.

\section{The Van den Bergh functor}
\label{VdB}
The universal representation $\pi\,\colon A\to\End V\otimes A_V$ given in \eqref{pi} enables us to see $\End V\otimes A_V$ as an $A$-bimodule (or equivalently as a left $A^\e$-module).
As $A_V\in \texttt{CommAlg}_\kk$, the image of $A_V$ under the natural inclusion $A_V\hookrightarrow \End V\otimes A_V$ is contained in the center of this $A$-bimodule.
So, we will see $\End V\otimes A_V$ as an $A^\e$-$A_V$-bimodule. 
If $\texttt{Mod}(A_V)$ denotes the category of (left) $A_V$-modules, following \cite{VdB08a}, Lemma 3.3.1, we define the \emph{Van den Bergh functor} by
\begin{equation}
 (-)_V\,\colon \texttt{Bimod}(A)\lto \texttt{Mod}(A_V)\colon\quad M\longmapsto M\otimes_{A^\e}(\End V\otimes A_V).
\label{VdB-functor}
 \end{equation}

 \begin{remark}
 Whereas the functor \eqref{functor-V} acts on the category $\texttt{Alg}_\kk$, the functor \eqref{VdB-functor} takes $A$-bimodules.
  So, from the context, it should be clear which functor is being used. 
Nevertheless, note that \eqref{VdB-functor} is the main object of study of this note.
 \end{remark}

As in \secref{sub:first-repr}, we can describe explicitly $M_V$ as the $A_V$-module generated by symbols
$\{m_{jl}\mid m\in M,\; 1\leq j,l\leq N\}$ satisfying
\[
 (m+m^\prime)_{jl}=m_{jl}+m^\prime_{jl}, \quad (\lambda m)_{jl}=\lambda m_{jl},\quad (am)_{jl}=\sum^n_{r=1}a_{jr}m_{rl}, \quad (ma)_{jl}=\sum^n_ {r=1}a_{rl}m_{jr},
\]
for all $\lambda\in\kk$, $a\in A$.\\

But a better approach consists of defining a similar functor to \eqref{functor-sqrt} applied to bimodules. 
Let $\pi_{\sqrt[V]{-}}\,\colon A\to\End V\otimes \sqrt[V]{A}$ be the universal algebra homomorphism through the adjunction of Proposition \ref{prop-repr-sqrt}(i).
Then $ \sqrt[V]{A}\otimes V$ carries a structure of a left $\End V\otimes \sqrt[V]{A}$-module and right $\sqrt[V]{A}$-module. 
Hence, $\sqrt[V]{A}\otimes V$ is an $A-\sqrt[V]{A}$-bimodule, by restricting the left action via the universal representation $\pi$.
Similarly, we can see $V^*\otimes\sqrt[V]{A}$ as $\sqrt[V]{A}-A$-bimodule.
Then, we can define the functor
\begin{equation}
 \sqrt[V]{-}\,\colon \texttt{Bimod}(A)\lto \texttt{Bimod}(\sqrt[V]{A})\colon \quad M\longmapsto (V^*\otimes\sqrt[V]{a})\otimes_A M\otimes_A(\sqrt[V]{A}\otimes V).
 \label{sqrt-bimodules}
\end{equation}
Using the natural projection $\sqrt[V]{A}\to A_V$, we can consider $A_V$ as a bimodule over $\sqrt[V]{A}$. So,
\begin{equation}
(-)_{\ab}\,\colon\texttt{Bimod}(\sqrt[V]{A})\lto \texttt{Mod}(A_V)\colon\quad M\longmapsto M_{\ab}:=M\otimes_{(\sqrt[V]{A})^\e} A_V.
 \label{functor-ab-bimod}
\end{equation}
Combining \eqref{sqrt-bimodules} and \eqref{functor-ab-bimod}, we obtain an alternative description of the Van den Bergh functor \eqref{VdB-functor}:
\[
(-)_V\,\colon\texttt{Bimod}(A)\longmapsto \texttt{Mod}(A_V)\,\colon\quad M\longmapsto M_V=(\sqrt[V]{M})_{\ab}=M\otimes_{A^{\e}}(\End V\otimes A_V)
\]

The following result is similar to Propositions \ref{prop-repr-sqrt} and \ref{prop-adjunction-repr-sch}:

\begin{proposition}[\cite{BKR13}, Lemma 5.1]
 For any $M\in\texttt{Bimod}(A)$, $N\in\texttt{Bimod}(\sqrt[V]{A})$ and $L\in\texttt{Mod}(A_V)$, we have \begin{enumerate}
  \item [\textup{(i)}]
   There exist a canonical isomorphism
  \begin{equation}
  \Hom_{(\sqrt[V]{A})^\e}(\sqrt[V]{M},N)\simeq\Hom_{A^\e}(M,\End V\otimes N);
  \end{equation}
    \item [\textup{(ii)}]
$M_V$ represents the functor $(-)_V$ in \eqref{VdB-functor}. In other words, 
    \begin{equation}
    \Hom_{A_V}(M_V,L)\simeq\Hom_{A^\e}(M,\End V\otimes L).
    \label{adjunction-bimod}
    \end{equation}
 \end{enumerate}
   \label{lemma5.1.1}
\end{proposition}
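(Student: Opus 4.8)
The plan is to prove both parts by adjunction-chasing, starting from the already-established algebra-level statements in Proposition~\ref{prop-repr-sqrt} and Proposition~\ref{prop-adjunction-repr-sch}, and then transporting them to bimodules via the observation that $\End V \otimes N$ is to $N$ what $\End V \otimes B$ is to $B$. First I would fix notation: write $\Gamma := \sqrt[V]{A}$, let $\pi_\Gamma\colon A \to \End V \otimes \Gamma$ be the universal homomorphism of Proposition~\ref{prop-repr-sqrt}(i), and recall that $\Gamma \otimes V$ is an $A$--$\Gamma$-bimodule (left $A$-action restricted along $\pi_\Gamma$) and $V^* \otimes \Gamma$ is a $\Gamma$--$A$-bimodule, so $\sqrt[V]{M} = (V^* \otimes \Gamma) \otimes_A M \otimes_A (\Gamma \otimes V)$ is a $\Gamma$-bimodule. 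The key elementary input is the canonical isomorphism of $\kk$-algebras $\End V \otimes \End(V)^{\op} \cong \End(V \otimes V^*) \cong \End_\kk(\End V)$ together with the fact that, for a $\kk$-vector space $V$ of dimension $N$, tensoring with $V$ and $V^*$ sets up a Morita-type equivalence; concretely, $(V^* \otimes \Gamma) \otimes_\Gamma (\Gamma \otimes V) \cong \End V \otimes \Gamma$ as $A^\e$-modules, and $(\Gamma \otimes V) \otimes_\kk (V^* \otimes \Gamma) \cong \Gamma \otimes \End V \otimes \Gamma$ suitably interpreted.

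For part~(i), I would establish the adjunction $\sqrt[V]{-} \dashv (\End V \otimes -)$ between $\texttt{Bimod}(A)$ and $\texttt{Bimod}(\Gamma)$ by a direct computation: given $M \in \texttt{Bimod}(A)$ and $N \in \texttt{Bimod}(\Gamma)$,
\[
\Hom_{\Gamma^\e}\!\big((V^* \otimes \Gamma) \otimes_A M \otimes_A (\Gamma \otimes V),\, N\big) \;\cong\; \Hom_{A^\e}\!\big(M,\, \Hom_{\kk}(V^*, N \otimes_\kk V)\big),
\]
using tensor--hom adjunction twice (once on each side, absorbing the $\Gamma$-module structures), and then identifying $\Hom_\kk(V^*, N \otimes V) \cong V \otimes N \otimes V^* \cong \End V \otimes N$ as $A^\e$-modules — here one has to be careful that the $A$-bimodule structure on $\End V \otimes N$ is precisely the one induced by $\pi$, which is exactly how it was set up in the text. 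This is essentially Van den Bergh's Lemma 3.3.1 / Berest--Khachatryan--Ramadoss Lemma 5.1, so I would cite \cite{VdB08a} and \cite{BKR13} and present the isomorphism explicitly on elements, checking naturality in $M$ and $N$.

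For part~(ii), I would compose the adjunction of~(i) with the abelianization adjunction. By~\eqref{functor-ab-bimod}, $(-)_{\ab} = - \otimes_{\Gamma^\e} A_V$ is left adjoint to the restriction-of-scalars functor $\texttt{Mod}(A_V) \to \texttt{Bimod}(\Gamma)$ along the (central) algebra map $\Gamma^\e \to \Gamma \to A_V$; this is the bimodule analogue of the abelianization adjunction used to define~\eqref{functor-V}. Hence for $L \in \texttt{Mod}(A_V)$,
\[
\Hom_{A_V}\!\big((\sqrt[V]{M})_{\ab},\, L\big) \;\cong\; \Hom_{\Gamma^\e}\!\big(\sqrt[V]{M},\, L\big) \;\cong\; \Hom_{A^\e}\!\big(M,\, \End V \otimes L\big),
\]
where the last step is part~(i) applied with $N = L$ viewed as a $\Gamma$-bimodule via $\Gamma \to A_V$. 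Since $M_V = (\sqrt[V]{M})_{\ab}$ by the alternative description of the Van den Bergh functor given just before the statement, this is exactly~\eqref{adjunction-bimod}, and it says precisely that $M_V$ represents $(-)_V$ on $\texttt{Mod}(A_V)$ in the sense that the functor $M \mapsto M_V$ is left adjoint to $L \mapsto \End V \otimes L$.

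The main obstacle I anticipate is bookkeeping of the several module structures in part~(i): $\End V \otimes N$ must be tracked simultaneously as an $A$-bimodule (via $\pi$) and as an $N$-module, and one must verify that the iterated tensor--hom manipulations respect all of them and are natural, rather than just being $\kk$-linear isomorphisms. Getting the $\op$'s and left-versus-right conventions right — especially the interplay between the outer/inner bimodule structures on $\End V \otimes A_V$ flagged in~\secref{VdB} and the enveloping-algebra $\Gamma^\e$ — is where the real care is needed; everything else is a formal consequence of composing two adjunctions already available from the algebra-level results.
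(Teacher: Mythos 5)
Your proposal is essentially the standard proof, and it is the right one. Note that the paper itself offers no proof of this proposition --- it is quoted from \cite{BKR13}, Lemma 5.1 (and \cite{VdB08a}, Lemma 3.3.1) with only the remark that it is ``similar to'' the algebra-level Propositions \ref{prop-repr-sqrt} and \ref{prop-adjunction-repr-sch} --- so there is nothing in the text to compare against line by line. Your two-step strategy is exactly the one the paper implicitly relies on later: part (i) as a tensor--hom adjunction through the Morita bimodules $V^*\otimes\sqrt[V]{A}$ and $\sqrt[V]{A}\otimes V$, and part (ii) as the composite with the abelianization adjunction $\Hom_{A_V}((\sqrt[V]{M})_{\ab},L)\simeq\Hom_{(\sqrt[V]{A})^\e}(\sqrt[V]{M},L)$; indeed the proof of Proposition \ref{prop-1-forms} in the paper uses precisely this chain with $M=\Omega^1_{\nc}A$. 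One small correction in your part (i): the identification $\Hom_\kk(V^*, N\otimes V)\cong V\otimes N\otimes V^*$ is off --- for finite-dimensional $V$ one has $\Hom_\kk(V^*, N\otimes V)\cong V\otimes N\otimes V$, which is not $\End V\otimes N$. The correct intermediate object coming from the two adjunctions is $\Hom_\kk(V^*\otimes V, N)\cong V\otimes V^*\otimes N\cong\End V\otimes N$ (one factor contributes a $V$, the other a $V^*$, since $\Gamma\otimes V$ is free as a right $\Gamma$-module on $V$ and $V^*\otimes\Gamma$ is free as a left $\Gamma$-module on $V^*$). This is exactly the dualization bookkeeping you flagged as the delicate point, and it does not affect the validity of the argument once fixed; the final identification with $\End V\otimes N$, carrying the $A$-bimodule structure induced by $\pi_{\sqrt[V]{-}}$, is correct.
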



Taking $L=M_V$ in \eqref{adjunction-bimod}, we define the universal homomorphism
\[
 \bar{\pi}\,\colon M\lto\End V\otimes A_V
\]
as the image of the identity $\Id_{M_V}\,\colon M_V\to M_V$ under the adjunction \eqref{adjunction-bimod}.
This map allows us to apply the functor \eqref{VdB-functor} to morphisms of bimodules.
Given $f\,\colon M_1\to M_2\in\texttt{Bimod}(A)$, we construct the diagram
\[
 \xymatrix{
 M_1\ar[d]^-f\ar[r]^-{\bar{\pi}_1} & \End V\otimes (M_1)_V
 \\
  M_2\ar[r]^-{\bar{\pi}_2} & \End V\otimes (M_2)_V
  }
\]
Applying the universal property of $\bar{\pi}_1$ to the morphism $\bar{\pi}_2\circ f\,\colon M_1\to \End V\otimes (M_2)_V$,
we obtain a unique morphism
\[
 g\colon\, (M_1)_V\lto M_2)_V
\]
such that $\Id_{\End V}\otimes g$ makes the diagram commute. Define $(f)_V=g$.\\

A final remark is that \eqref{VdB-functor} is an additive functor, which sends projective finitely generated bimodules to projective finitely generated modules.

\section{The Kontsevich--Rosenberg principle}
\label{KR}
The paradigm of non-commutative algebraic geometry is the \emph{Kontsevich--Rosenberg principle},  whereby a structure on
an associative algebra has geometric meaning if it induces standard geometric structures
on its representation spaces.\\

Maybe, the most intuitive way of understanding the Kontsevich--Rosenberg principle is by means of the universal representation $\pi$ (see \eqref{pi}).
Let $\Tr\,\colon\End V\to \kk$ be the linear trace map. 
To each element $a\in A$, we associate the function $\pi(a):=\widehat{a}\,\colon \Rep(A,V)\to \End V$,
$\rho\mapsto\widehat{a}(\rho):=\rho(a)$. By definition, the assignment $a\mapsto \widehat{a}$ gives an associative algebra homomorphism.
Then we can define the composite
\begin{equation}
  \xymatrix{
 A\ar[r]^-\pi & \End V\otimes A_V\ar[r]^-{\Tr\otimes\Id} & A_V,
 }
 \label{mecanismo-KR}
\end{equation}
Therefore, for any $a$, we obtain an element $\Tr\widehat{a}$. Hence, we require that $(a)_V=\Tr\widehat{a}$.
 Since $\Tr(\varphi_1\circ\varphi_2-\varphi_2\circ\varphi_1)=0$, for all $\varphi_1,\varphi_2\in\End V$, 
the map \eqref{mecanismo-KR} descends to a map of vector spaces $A/[A,A]\to A_V$, where $[A,A]$ is the vector subspace of commutators.
By the universal property of symmetric algebras, we finally obtain a morphism of algebras
\begin{equation}
 \Sym^\bullet\left(^A/_{[A,A]}\right)\lto A_V.
\label{KR-functions}
\end{equation}
This is the reason that explains why Kontsevich--Rosenberg \cite{KR00} proposed $ \Sym^\bullet\left(^A/_{[A,A]}\right)$ as the ``algebra of non-commutative functions''.\\

In fact, it can be proved (\cite{Kh12}, Proposition 25) that the image of the map \eqref{KR-functions} is contained in $(A_V)^{\GL(V)}$, the subalgebra of invariants of the action of $\GL(V)$ on $\Rep(A,V)$.
Remarkably, decades ago, Procesi \cite{Pr87} was able to prove that the map
\[
  \Sym^\bullet\left(^A/_{[A,A]}\right)\lto (A_V)^{\GL(V)}
\]
is surjective.\\

But this principle has some limitations. 
Firstly, this principle only works well in practice when the algebra $A$ is smooth (in the sense of \secref{smoothness}).
An insightful perspective was introduced by Berest, Ramadoss and his authors (see \cite{BFR14} for an excellent survey) based on the replacement of $\Rep(A,V)$ by $\DRep(A,V)$,
the differential graded scheme obtained by deriving the classical representation functor \eqref{Rep-functor-comm} in the sense of Quillen's homotopical algebra.
Intuitively, $\DRep(A,V)$ may be regarded as a desingularization of the scheme $\Rep(A,V)$.
Also, in the seminal paper \cite{Gi07}, Ginzburg explored the idea of that any Calabi--Yau algebra of dimension 3 ``arising in nature'' is defined as the quotient of $\kk\langle x_1,...,x_d\rangle$, the free algebra on $d$-generators, by the two-sided ideal generated by all $d$ partial derivatives of a cyclic word (called the \emph{potential}).
The problem, as Ginzburg pointed out in \S 2.1, is that for algebras of this form, the scheme of representations has virtual dimension zero.
This was one of the motivations in \cite{GS10} to extend the representation functor to act on wheelalgebras.
The authors were able to prove that the path algebra of a quiver is a wheeled Calabi--Yau structure, and it induces the Calabi--Yau structure on $\Rep( A,V)$.

\subsection{The Kontsevich--Rosenberg principle for non-commutative differential forms}
One of the main characteristics of the Van den Bergh functor $(-)_V$ defined in \eqref{VdB-functor} is that gives rise to a systematic and unified approach to the Kontsevich--Rosenberg principle.
In fact, this note is an attempt to exploit this idea. 
The following result is essential in the theory since it states that the bimodule $\Omega^1_{\nc} A$ satisfies the Kontsevich--Rosenberg principle.

\begin{proposition}
 We have the isomorphism 
 \[
 (\Omega^1_{\nc}A)_V\simeq \Omega^1_{\comm}(A_V),
 \]
where $ \Omega^1_{\comm}(-)$ denotes the usual module of K\"ahler differentials.
\label{prop-1-forms}
\end{proposition}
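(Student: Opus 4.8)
The plan is to identify both sides with a representing object and then invoke uniqueness. On the one hand, the module of K\"ahler differentials $\Omega^1_{\comm}(A_V)$ represents the functor $\Der_{\kk}(A_V, -)$ on $\texttt{Mod}(A_V)$; that is, for every $A_V$-module $L$ there is a natural isomorphism $\Hom_{A_V}(\Omega^1_{\comm}(A_V), L) \simeq \Der_{\kk}(A_V, L)$. On the other hand, Proposition \ref{lemma5.1.1}(ii) applied to $M = \Omega^1_{\nc}A$ gives $\Hom_{A_V}((\Omega^1_{\nc}A)_V, L) \simeq \Hom_{A^\e}(\Omega^1_{\nc}A, \End V \otimes L)$, and by the universal property \eqref{sub:ncdiffforms.corep} of $\Omega^1_{\nc}A$ the right-hand side is $\Der_{\kk}(A, \End V \otimes L)$, where $\End V \otimes L$ carries the $A$-bimodule structure coming from the universal representation $\pi$ of \eqref{pi}. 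So it suffices to produce a natural isomorphism
\[
\Der_{\kk}(A_V, L) \simeq \Der_{\kk}(A, \End V \otimes L),
\]
and then conclude by Yoneda.

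First I would construct the map from left to right: given a derivation $D \colon A_V \to L$, compose $\pi \colon A \to \End V \otimes A_V$ with $\Id_{\End V} \otimes D \colon \End V \otimes A_V \to \End V \otimes L$; since $\pi$ is an algebra homomorphism and $\Id_{\End V} \otimes D$ is a derivation over the algebra map $\Id_{\End V} \otimes (\text{unit})$, the composite is a derivation of $A$ into $\End V \otimes L$ with the $\pi$-twisted bimodule structure. For the inverse, given a derivation $\delta \colon A \to \End V \otimes L$, I would use the explicit generators-and-relations description of $A_V$ from \secref{sub:first-repr}: $A_V$ is generated by the matrix-entry symbols $a_{ij}$, and $\pi(a) = \sum_{ij} E_{ij} \otimes a_{ij}$ in terms of the standard matrix units $E_{ij}$. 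Writing $\delta(a) = \sum_{ij} E_{ij} \otimes \delta(a)_{ij}$ with $\delta(a)_{ij} \in L$, one defines $\widetilde{D}(a_{ij}) := \delta(a)_{ij}$ and checks that this is compatible with the defining relations of $A_V$ (linearity is immediate; the multiplicative relation $(ab)_{ij} = \sum_l a_{il} b_{lj}$ must be matched against the Leibniz rule for $\delta$ combined with the fact that $L$ is a \emph{symmetric} $A_V$-module, so that the two $A_V$-actions produced by the bimodule structure on $\End V \otimes L$ agree). This forces $\widetilde{D}$ to extend to a well-defined derivation $A_V \to L$, and one verifies the two constructions are mutually inverse and natural in $L$.

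The main obstacle I anticipate is the bookkeeping in the inverse construction: one must carefully track how the $A$-bimodule structure on $\End V \otimes L$ (restricted along $\pi$, with $L$ central) interacts with the matrix-entry relations, and confirm that the Leibniz identity for $\delta$ translates \emph{exactly} into the relation $(ab)_{ij} = a_{il}b_{lj}$ being respected — this is where the hypothesis that $L$ is an ordinary (symmetric) $A_V$-module, rather than a genuine bimodule, is used in an essential way. Once the natural isomorphism of functors $\Hom_{A_V}((\Omega^1_{\nc}A)_V, -) \simeq \Hom_{A_V}(\Omega^1_{\comm}(A_V), -)$ is established, the Yoneda lemma yields the desired isomorphism $(\Omega^1_{\nc}A)_V \simeq \Omega^1_{\comm}(A_V)$ of $A_V$-modules. (Alternatively, one could give a direct computation using the explicit presentation of $(\Omega^1_{\nc}A)_V$ by the symbols $(\du a)_{jl}$ and match them with $\du(a_{jl})$, but the functorial argument is cleaner and avoids choosing generators.)
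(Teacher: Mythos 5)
Your argument is correct, and it reaches the conclusion by a genuinely different route than the paper's. Both proofs are Yoneda arguments that test each side against an arbitrary $L\in\texttt{Mod}(A_V)$, but the intermediate identifications differ. The paper (following Berest) stays entirely in the noncommutative world one step longer: it uses Proposition \ref{lemma5.1.1}(i) to rewrite $\Hom_{A_V}((\Omega^1_{\nc}A)_V,L)$ as $\Hom_{(\sqrt[V]{A})^\e}(\sqrt[V]{\Omega^1_{\nc}A},L)$, invokes the compatibility $\sqrt[V]{\Omega^1_{\nc}A}\simeq\Omega^1_{\nc}(\sqrt[V]{A})$ of the square-root functor with noncommutative $1$-forms, and then passes from $\Der(\sqrt[V]{A},L)$ to $\Der(A_V,L)$ by observing that a derivation into a symmetric bimodule kills commutators. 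You instead use part (ii) of Proposition \ref{lemma5.1.1} to land in $\Der_\kk(A,\End V\otimes L)$ and then prove the ``derived adjunction'' $\Der_\kk(A_V,L)\simeq\Der_\kk(A,\End V\otimes L)$ by hand, via the universal representation $\pi$ in one direction and the matrix-entry presentation of $A_V$ from \secref{sub:first-repr} in the other. Your route avoids knowing anything about $\Omega^1_{\nc}(\sqrt[V]{A})$, at the price of the generators-and-relations bookkeeping you anticipate; that bookkeeping does close up (the Leibniz rule for $\delta$ matched against $(ab)_{ij}=\sum_l a_{il}b_{lj}$ works exactly as you indicate, and you correctly locate where the symmetry of $L$ enters — it is the same point where the paper's proof uses that derivations into symmetric bimodules vanish on commutators). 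If you want to make your middle step slicker and coordinate-free, note that $\Der_\kk(A_V,L)\simeq\Der_\kk(A,\End V\otimes L)$ follows formally from the adjunction \eqref{adjunction-1} applied to the square-zero extension $A_V\oplus L$: algebra maps $A_V\to A_V\oplus L$ over $\Id_{A_V}$ are derivations $A_V\to L$, and under \eqref{adjunction-1} they correspond to algebra maps $A\to\End V\otimes(A_V\oplus L)$ over $\pi$, i.e.\ to derivations $A\to\End V\otimes L$. Either way, the Yoneda conclusion is sound.
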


\begin{proof}(by Yuri Berest)
The abelianization map gives a natural projection
\begin{equation}
\sqrt[V]{A}  \twoheadrightarrow A_V.
\label{ab-map}
\end{equation}
For any $M \in \texttt{Bimod}(A)$ and $L \in \texttt{Mod}(A_V)$, we have
\[
\Hom_{A_V}(M_V, L) = \Hom_{(\sqrt[V]{A})^\e}(\sqrt[V]{M}, L)
\]
where $ L$ is seen as a symmetric $A_V$-bimodule and hence
as a $\sqrt[V]{A}$-bimodule (via \eqref{ab-map}).
Altervatively, we can take $N=L$ in Proposition \ref{lemma5.1.1}. 
In the particular case when $M=\Omega^1_{\nc}A$, it gives
%
%
%
%
%
%
%
%
%
\begin{align*}
\Hom_{A_V}((\Omega^1_{\nc} A)_V, L) &\simeq \Hom_{\sqrt[V]{A}^\e}(\sqrt[V]{\Omega^1_{\nc}(A)}, L)
\\
&\simeq \Hom_{\sqrt[V]{A}^\e}(\Omega^1_{\nc}(\sqrt[V]{A}), L)
\\
&\simeq\Der(\sqrt[V]{A},L)
\\
&\simeq\Der(A_V,L)
\\
&\simeq\Hom_{A_V}(\Omega_{\comm}^1(A_V),L)
\end{align*}
where the isomorphism  $\Der(\sqrt[V]{A},L) \simeq \Der(A_V,L)$ is due to the fact that any derivation $\sqrt[V]{A}\to L$ with values in a symmetric $A_V$-bimodule
vanishes on commutators in $\sqrt[V]{A}$, and hence induces a (unique) derivation $A_V\to L$. 
The result follows by the Yoneda Lemma.
\end{proof}


The following isomorphism is proved by checking that both sides have the same generators and relations:

\begin{lemma}[\cite{VdB08a}, Lemma 3.3.2]
 Let $P$ be an $A$-bimodule. Then we have
 \[
  (T_AP)_V\simeq \Sym^{\bullet}_{A_V}P_V,
 \]
where $(-)_V$ acts on the left-hand side on the algebra $T_AP$ (not on the $A$-bimodule $T_AP$).
\label{KR-tensor-alg}
\end{lemma}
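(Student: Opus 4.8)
The plan is to prove the isomorphism $(T_A P)_V \simeq \Sym^\bullet_{A_V} P_V$ by exhibiting a natural bijection on $\Hom$-sets into an arbitrary commutative $A_V$-algebra and then invoking Yoneda, which parallels the proof of Proposition~\ref{prop-1-forms}. Concretely, for any $C \in \texttt{CommAlg}_{A_V}$, I would compute $\Hom_{\texttt{CommAlg}_{A_V}}((T_A P)_V, C)$ on one side and $\Hom_{\texttt{CommAlg}_{A_V}}(\Sym^\bullet_{A_V} P_V, C)$ on the other, and check that both are naturally identified with the same set of ``compatible pairs''.

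First I would unwind the left-hand side. Since $(-)_V$ on the algebra $T_A P$ is the functor of \eqref{functor-V}, namely $A \mapsto (\sqrt[V]{A})_{\ab}$ applied to $T_A P$, Proposition~\ref{prop-adjunction-repr-sch}(i) gives
\[
\Hom_{\texttt{CommAlg}_\kk}((T_A P)_V, C) \simeq \Hom_{\texttt{Alg}_\kk}(T_A P, \End V \otimes C).
\]
By the universal property of the tensor algebra $T_A P$ (an algebra map out of $T_A P$ is an algebra map out of $A$ together with an $A$-bimodule map out of $P$ to the restriction), this is the set of pairs consisting of an algebra morphism $\phi\colon A \to \End V \otimes C$ and an $A$-bimodule morphism $P \to \End V \otimes C$, where $\End V \otimes C$ is an $A$-bimodule via $\phi$. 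The first component is, by \eqref{adjunction-1}, the same as an $A_V$-algebra structure on $C$, which we have fixed; so the remaining data is $\Hom_{A^\e}(P, \End V \otimes C)$ over the universal representation $\pi$. By Proposition~\ref{lemma5.1.1}(ii) this is exactly $\Hom_{A_V}(P_V, C)$.

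Then I would unwind the right-hand side: by the universal property of the symmetric algebra, $\Hom_{\texttt{CommAlg}_{A_V}}(\Sym^\bullet_{A_V} P_V, C) \simeq \Hom_{A_V}(P_V, C)$ as well, where on the right $P_V$ is viewed just as an $A_V$-module and $C$ as an $A_V$-module. Matching the two computations — and checking that the identification is natural in $C$, which amounts to tracking that all the adjunction isomorphisms used are natural — yields the claimed isomorphism by the Yoneda Lemma. I would also remark that this abstract argument reproduces the concrete ``same generators and relations'' check alluded to in the statement: the generators $(p)_{jl}$ of $P_V$ freely generate $\Sym^\bullet_{A_V} P_V$ as a commutative $A_V$-algebra, and the bimodule relations on $P$ translate precisely into the defining relations of $P_V$ as an $A_V$-module. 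The main obstacle is bookkeeping rather than conceptual: one must be careful that the $A$-bimodule structure on $\End V \otimes C$ induced by $\phi$ is precisely the one coming from the fixed $A_V$-algebra structure via $\pi$ (so that ``pair of maps'' genuinely decouples into ``fixed first component'' plus ``$A^\e$-linear second component''), and that passing through $\sqrt[V]{-}$ and the abelianization does not disturb naturality; once this compatibility is pinned down, the rest is formal.
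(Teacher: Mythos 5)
Your argument is correct, but it is a genuinely different route from the one the paper takes. The paper (following Van den Bergh, \cite{VdB08a}, Lemma 3.3.2) simply asserts that the isomorphism is "proved by checking that both sides have the same generators and relations": one writes down the explicit presentation of $(T_AP)_V$ by symbols $a_{jl}$, $p_{jl}$ and verifies that the relations coming from the tensor-algebra multiplication and the abelianization are exactly the defining relations of $\Sym^\bullet_{A_V}P_V$. You instead show that both sides corepresent the same functor $C\mapsto\Hom_{A_V}(P_V,C)$ on $\texttt{CommAlg}_{A_V}$ and invoke Yoneda, chaining together Proposition \ref{prop-adjunction-repr-sch}(i), the universal property of $T_AP$, and Proposition \ref{lemma5.1.1}(ii). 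This is sound, and it is in fact more in the spirit of the note's treatment of Proposition \ref{prop-1-forms}; the one compatibility you flag -- that the $A$-bimodule structure on $\End V\otimes C$ induced by $\phi$ agrees with the one obtained from $\pi$ by restriction of scalars along the structure map $A_V\to C$ -- holds precisely because $\phi=(\Id_{\End V}\otimes g)\circ\pi$ under the adjunction \eqref{adjunction-1}, so the decoupling into a fixed first component plus an $A^\e$-linear second component is legitimate. What the explicit check buys is a concrete description of the isomorphism on generators (useful for computations such as Corollary 3.3.5); what your argument buys is independence from any presentation and automatic naturality in $P$. If you want the isomorphism as \emph{graded} algebras, add the one-line remark that the Yoneda isomorphism identifies the degree-one pieces (both universal objects restrict to $\Id_{P_V}$ there) and both sides are generated in degree one.
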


Then, since $\Omega^\bullet_{\nc}A=T_A(\Omega^1_{\nc}A)$, and $\Omega^\bullet_{\comm}(A_V)=\bigwedge_{A_V}(\Omega^1_{\comm}A_V)$, by Proposition \ref{prop-1-forms} and Lemma \ref{KR-tensor-alg}, 
it is immediate the following isomorphism (\cite{VdB08a}, Corollary 3.3.5):
\[
 (\Omega^\bullet_{\nc}A)_V\simeq \Omega^{\bullet}_{\comm} (A_V).
\]

\begin{lemma}
 Let $\beta\in\DDR{2}{A}$ such that $\du\beta=0(\in\DDR{3}{A})$. Then $\Tr(\widehat{\beta})\in\Omega^2_{\comm}(A_V)$ is closed.
\label{KR-DeRham}
 \end{lemma}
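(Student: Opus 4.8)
The plan is to reduce the statement to the already-established isomorphisms for the Karoubi--de Rham complex and the non-commutative differential forms, showing that the Van den Bergh functor intertwines the two de Rham differentials. First I would recall the chain of isomorphisms assembled just above: Proposition \ref{prop-1-forms} gives $(\Omega^1_{\nc}A)_V \simeq \Omega^1_{\comm}(A_V)$, and Lemma \ref{KR-tensor-alg} applied to $P=\Omega^1_{\nc}A$ upgrades this to $(\Omega^\bullet_{\nc}A)_V \simeq \Omega^\bullet_{\comm}(A_V)$ as graded algebras. The key additional point is that under this identification the universal derivation $\du\colon \Omega^\bullet_{\nc}A \to \Omega^{\bullet+1}_{\nc}A$ corresponds to the commutative de Rham differential on $\Omega^\bullet_{\comm}(A_V)$; this should follow because both are the unique degree-$1$ derivations extending, respectively, $\du\colon A \to \Omega^1_{\nc}A$ and $\du\colon A_V \to \Omega^1_{\comm}(A_V)$, and the universal representation $\pi\colon A \to \End V\otimes A_V$ together with the trace is compatible with these (this is exactly the content of the map \eqref{mecanismo-KR} at the level of $0$-forms, and the isomorphism of Proposition \ref{prop-1-forms} is built from the universal property identifying $\Der$'s).

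Next I would bring in the trace/commutator-quotient layer. The non-commutative Karoubi--de Rham complex is $\DDR{\bullet}{A} = \Omega^\bullet_{\nc}A/[\Omega^\bullet_{\nc}A,\Omega^\bullet_{\nc}A]$, and the assignment $\beta \mapsto \Tr(\widehat{\beta})$ is precisely the composition of the Van den Bergh functor on $\Omega^\bullet_{\nc}A$ (giving $\Omega^\bullet_{\comm}(A_V)$) with the trace map, which kills graded commutators and hence factors through $\DDR{\bullet}{A}$; this is the higher-degree analogue of \eqref{KR-functions}. So I obtain a well-defined map $\Tr(\widehat{\,\cdot\,})\colon \DDR{2}{A} \to \Omega^2_{\comm}(A_V)$. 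Because $\du$ on $\DDR{\bullet}{A}$ is induced by $\du$ on $\Omega^\bullet_{\nc}A$, and because the latter matches the commutative de Rham differential after applying $(-)_V$, the square
\[
\xymatrix{
\DDR{2}{A} \ar[r]^-{\Tr(\widehat{\,\cdot\,})} \ar[d]_-{\du} & \Omega^2_{\comm}(A_V) \ar[d]^-{\du}
\\
\DDR{3}{A} \ar[r]^-{\Tr(\widehat{\,\cdot\,})} & \Omega^3_{\comm}(A_V)
}
\]
commutes. Then, given $\beta \in \DDR{2}{A}$ with $\du\beta = 0$ in $\DDR{3}{A}$, commutativity forces $\du(\Tr(\widehat{\beta})) = \Tr(\widehat{\du\beta}) = \Tr(\widehat{0}) = 0$, which is the desired closedness.

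The main obstacle I anticipate is not any single hard estimate but rather the bookkeeping needed to justify that the differential is transported correctly: one must check that the graded-algebra isomorphism $(\Omega^\bullet_{\nc}A)_V \simeq \Omega^\bullet_{\comm}(A_V)$ coming from Lemma \ref{KR-tensor-alg} is genuinely compatible with $\du$, i.e. that the isomorphism of Proposition \ref{prop-1-forms} in degree $1$ sends $\du a \mapsto \du(\Tr\widehat{a})$ (up to the natural identifications), and that the multiplicative/Leibniz structure is respected so that uniqueness of derivation extensions applies. Concretely I would verify this on generators: trace through Berest's Yoneda argument in Proposition \ref{prop-1-forms} to see that the composite $A \xrightarrow{\du} \Omega^1_{\nc}A \rightsquigarrow (\Omega^1_{\nc}A)_V \simeq \Omega^1_{\comm}(A_V)$ agrees with $a \mapsto \du((a)_V)$ where $(a)_V = \Tr\widehat{a}$, and then invoke the universal property of $\Omega^\bullet_{\nc}A$ (Lemma \ref{dg-univ-property}) to extend. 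Once that compatibility is nailed down, the rest is formal diagram-chasing as above.
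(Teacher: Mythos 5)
Your proposal follows essentially the same route as the paper: both construct the map $\Tr(\widehat{\,\cdot\,})\colon\DDR{2}{A}\to\Omega^2_{\comm}(A_V)$, observe that the symmetry of the trace kills graded commutators so the map descends to the Karoubi--de Rham quotient, and conclude by compatibility with the de Rham differentials. The only difference is that the paper simply asserts the last compatibility, whereas you sketch a justification of it via the universal properties, which is a reasonable (and welcome) amount of extra care.
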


\begin{proof}
 Let $\beta=a_0\du a_1\du a_2\in\Omega^2_{\nc}A$. By \eqref{mecanismo-KR}, $(\beta)_V=\Tr(\widehat{\beta})=\Tr(\widehat{a}_0\du\widehat{a}_1\du\widehat{a}_2)\in\Omega^2_{\comm}(A_V)$,
 giving a map $\varphi\,\colon\Omega^2_{\nc}A\to \Omega^2_{\comm}(A_V)$.
 Since the trace is symmetric (i.e. $\Tr(ab)=\Tr(ba)$), it vanishes on the commutator $[\Omega^i_{\nc}A,\Omega^j_{\nc}A]_{i+j=2}$.
 Hence, we have a map $\DDR{2}{A}\to\Omega^2_{\comm}(A_V)$. Finally, this map commutes with the de Rham differentials and the result follows.
\end{proof}

\subsection{The Kontsevich--Rosenberg principle for double derivations}
In our opinion, part of the interest of the Van den Bergh functor is that gives an immediate proof of the Kontsevich--Rosenberg principle for double derivations, the key notion in this approach to non-commutative algebraic geometry:

\begin{proposition}[\cite{VdB08a}, Proposition 3.3.4, \cite{BKR13}, \S 5.5]
 Assume that $A$ is smooth. Then we have the isomorphism
 \[
  (\D A)_V\simeq \Der (A_V).
 \]
\label{KR-double-derivations}
\end{proposition}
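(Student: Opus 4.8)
The plan is to deduce the statement from the Kontsevich--Rosenberg principle for non-commutative 1-forms (Proposition \ref{prop-1-forms}) together with the fact that the Van den Bergh functor $(-)_V$ preserves projective finitely generated bimodules and is compatible with the relevant duality. First I would recall that, by the smoothness hypothesis (see \secref{smoothness}), $\Omega^1_{\nc}A$ is a finitely generated projective $A^\e$-module, so that double derivations are literally its bidual: by \eqref{eq:double-dual-1} and \eqref{def-doble-deriv} we have $\D A \cong (\Omega^1_{\nc}A)^\vee = \Hom_{A^\e}(\Omega^1_{\nc}A, (A\otimes A)_\out)$. On the commutative side, smoothness of $A$ forces $A_V$ to be smooth (this is the standard transfer of smoothness under the representation functor), so $\Omega^1_{\comm}(A_V)$ is a finitely generated projective $A_V$-module, and hence $\Der(A_V) = \Hom_{A_V}(\Omega^1_{\comm}(A_V), A_V)$ is its $A_V$-linear dual.

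The key step is then to show that the Van den Bergh functor intertwines the bidual $(-)^\vee$ on the category of finitely generated projective $A$-bimodules with the ordinary $A_V$-linear dual $\Hom_{A_V}(-,A_V)$ on finitely generated projective $A_V$-modules; that is, for such $M$ one has a natural isomorphism $(M^\vee)_V \cong \Hom_{A_V}(M_V, A_V)$. Granting this, applying it to $M = \Omega^1_{\nc}A$ and using Proposition \ref{prop-1-forms} (which gives $(\Omega^1_{\nc}A)_V \cong \Omega^1_{\comm}(A_V)$) yields
\[
(\D A)_V \cong \big((\Omega^1_{\nc}A)^\vee\big)_V \cong \Hom_{A_V}\big((\Omega^1_{\nc}A)_V, A_V\big) \cong \Hom_{A_V}\big(\Omega^1_{\comm}(A_V), A_V\big) \cong \Der(A_V),
\]
which is the desired isomorphism. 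To establish the intertwining isomorphism I would first verify it for the free bimodule $M = A^\e = A\otimes A$: here $M^\vee \cong A^\e$ as well (the bidual of the rank-one free bimodule), $M_V = (A\otimes A)\otimes_{A^\e}(\End V \otimes A_V) \cong \End V \otimes A_V$, and one checks that $\Hom_{A_V}(\End V\otimes A_V, A_V) \cong \End(V)^* \otimes A_V \cong \End V \otimes A_V$ using the trace pairing \eqref{ism-vector-hom}; naturality then propagates the isomorphism to all finitely generated projective bimodules by taking direct summands of finite direct sums of copies of $A^\e$, since all four functors in sight ($(-)^\vee$, $(-)_V$, $\Hom_{A_V}(-,A_V)$, and the composites) are additive and send the relevant projectives to projectives (the last remark of \secref{VdB}). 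Alternatively, and perhaps more cleanly, one can argue entirely by Yoneda: for $L \in \texttt{Mod}(A_V)$ one computes $\Hom_{A_V}((M^\vee)_V, L)$ using the adjunction \eqref{adjunction-bimod} of Proposition \ref{lemma5.1.1}, reduces to understanding $\sqrt[V]{M^\vee}$, and identifies it with the $(\sqrt[V]{A})$-bimodule dual of $\sqrt[V]{M}$, whose abelianization is the $A_V$-dual of $M_V$.

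The main obstacle I anticipate is precisely the commutation of duality with the Van den Bergh functor: one must be careful that $(-)^\vee$ uses the \emph{inner} bimodule structure on $(A\otimes A)_\out$ as target (see \eqref{double canonical isomorphism} and the surrounding discussion), and track how this inner/outer bookkeeping interacts with the tensor product $(-)\otimes_{A^\e}(\End V\otimes A_V)$ defining $(-)_V$; the trace pairing on $\End V$ is what ultimately reconciles the two sides, and making this identification natural (rather than merely an abstract isomorphism of $A_V$-modules) is the delicate point. The smoothness hypothesis enters exactly to guarantee that $\Omega^1_{\nc}A$ is projective over $A^\e$, so that $\D A$ really is its bidual and the whole argument is available; without it, the bidual need not recover $\D A$ and the statement can fail.
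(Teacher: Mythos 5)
Your proposal is correct and follows essentially the same route as the paper: the paper's proof is the chain $\Der(A_V)\simeq\Hom_{A_V}(\Omega^1_{\comm}A_V,A_V)\simeq\Hom_{A^\e}(\Omega^1_{\nc}A,\End V\otimes A_V)\simeq\Hom_{A^\e}(\Omega^1_{\nc}A,A^\e)\otimes_{A^\e}(\End V\otimes A_V)=(\D A)_V$, and your ``intertwining'' lemma $(M^\vee)_V\simeq\Hom_{A_V}(M_V,A_V)$ for finitely generated projective $M$ is, after applying the adjunction \eqref{adjunction-bimod}, exactly the hom--tensor interchange that constitutes the paper's third isomorphism, with smoothness entering at the same point. (Minor remark: the identification $\Der(A_V)\simeq\Hom_{A_V}(\Omega^1_{\comm}A_V,A_V)$ needs no smoothness of $A_V$, only the universal property of K\"ahler differentials.)
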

\begin{proof}
 Using \eqref{adjunction-bimod}, we have the following series of isomorphisms
 \begin{align*}
\Der(A_V)&\simeq \Hom_{A^{\e}}(\Omega^1_{\comm} A_V, A_V) 
 \\
&\simeq\Hom_{A^{\e}}(\Omega^1_{\nc}A, \End V\otimes A_V)
  \\
  &\simeq\Hom_{A^\e}(\Omega^1_{\nc}A, {}_{A^{\e}}A^\e)\otimes_{A^{\e}}(\End V\otimes A_V)
\\
  &\simeq \D A\otimes_{A^{\e}} (\End V\otimes A_V)
 \\
 &=(\D A)_V.\qedhere
 \end{align*}
\end{proof}

\subsection{The Kontsevich--Rosenberg principle for bi-symplectic forms}
In \cite{CBEG07}, Theorem 6.4.3 (ii), Crawley-Boevey--Etingof--Ginzburg proved that bi-symplectic forms (in the sense of Definition \ref{bi-sympldef}) satisfy the Kontsevich--Rosenberg. Using the features developed so far, especially the Van den Bergh functor introduced in \eqref{VdB-functor}, we can give an alternative proof of this remarkable result:

\begin{theorem}
Bi-symplectic forms satisfy the Kontsevich--Rosenberg principle.
\end{theorem}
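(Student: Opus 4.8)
The plan is to show that if $\omega\in\DDR{2}{A}$ is a bi-symplectic form on a (necessarily smooth) algebra $A$, then its image $\Tr(\widehat\omega)\in\Omega^2_{\comm}(A_V)$ under the Van den Bergh machinery is a genuine symplectic form on the representation scheme $\Rep(A,V)$, i.e.\ a closed, nondegenerate $2$-form. Closedness is already in hand: it is exactly the content of Lemma \ref{KR-DeRham}, since $\du\omega=0$ in $\DDR{3}{A}$. So the whole content of the theorem is \emph{nondegeneracy}, which means showing that the contraction map $\iota(\Tr\widehat\omega)\colon\Der(A_V)\to\Omega^1_{\comm}(A_V)$ is an isomorphism of $A_V$-modules.

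The key idea is that every ingredient in Definition \ref{bi-sympldef} is functorial under $(-)_V$, and the Van den Bergh functor sends the nc objects to their commutative counterparts by the results already proved: $(\Omega^1_{\nc}A)_V\simeq\Omega^1_{\comm}(A_V)$ (Proposition \ref{prop-1-forms}) and $(\D A)_V\simeq\Der(A_V)$ (Proposition \ref{KR-double-derivations}, using smoothness of $A$). First I would record that $A$ is smooth: a bi-symplectic form forces $\iota(\omega)\colon\D A\xrightarrow{\cong}\Omega^1_{\nc}A$, and since $\D A=(\Omega^1_{\nc}A)^\vee$ is always a dual, this isomorphism exhibits $\Omega^1_{\nc}A$ as reflexive and in fact projective (finitely generated), so $A$ is smooth in the sense of \secref{smoothness}; this legitimizes the use of Proposition \ref{KR-double-derivations}. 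Next I would apply the functor $(-)_V$ to the $A$-bimodule isomorphism $\iota(\omega)\colon\D A\xrightarrow{\cong}\Omega^1_{\nc}A$. Since $(-)_V$ is additive and preserves isomorphisms, we get an isomorphism of $A_V$-modules $(\iota(\omega))_V\colon(\D A)_V\xrightarrow{\cong}(\Omega^1_{\nc}A)_V$. Composing with the two identifications above yields an isomorphism $\Der(A_V)\xrightarrow{\cong}\Omega^1_{\comm}(A_V)$.

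The main obstacle — and the heart of the proof — is the compatibility statement: one must check that under the identifications of Propositions \ref{prop-1-forms} and \ref{KR-double-derivations}, the map $(\iota(\omega))_V$ coincides (up to a universal nonzero scalar, which here is just $1$) with the contraction $\iota(\Tr\widehat\omega)$ by the commutative $2$-form $\Tr\widehat\omega$ on $\Rep(A,V)$. This is a diagram-chase: one writes $\omega=\sum a_0\,\du a_1\,\du a_2$, computes $\iota_\Theta\omega$ via formula \eqref{reduced-contraction-general}, applies $(-)_V$ using the explicit generators-and-relations description $M_V=\langle m_{jl}\rangle$, and compares with the contraction of $\Tr(\widehat a_0\,\du\widehat a_1\,\du\widehat a_2)$ against a derivation of $A_V$ of the form $\partial/\partial(a)_{jl}$. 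The matching of the double-derivation index bookkeeping with the matrix-entry index bookkeeping is where the $\End V$-trace pairing \eqref{ism-vector-hom} enters and where the "outer/inner" bimodule distinction must be tracked carefully. I would isolate this as a lemma: the square relating $\iota(-)$ on the nc side and $\iota(-)$ on the commutative side commutes after applying $(-)_V$.

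\begin{proof}
By hypothesis $\omega\in\DDR{2}{A}$ is closed and the contraction $\iota(\omega)\colon\D A\to\Omega^1_{\nc}A$ is an isomorphism of $A$-bimodules. Since $\D A=(\Omega^1_{\nc}A)^\vee$, this isomorphism forces $\Omega^1_{\nc}A$ to be a finitely generated projective $A^\e$-module, so $A$ is smooth in the sense of \secref{smoothness}, and Proposition \ref{KR-double-derivations} applies.

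Apply the Van den Bergh functor \eqref{VdB-functor} to the bimodule isomorphism $\iota(\omega)\colon\D A\xrightarrow{\cong}\Omega^1_{\nc}A$. As $(-)_V$ is additive and hence preserves isomorphisms, $(\iota(\omega))_V\colon(\D A)_V\xrightarrow{\cong}(\Omega^1_{\nc}A)_V$ is an isomorphism of $A_V$-modules. Combining with the identifications $(\D A)_V\simeq\Der(A_V)$ from Proposition \ref{KR-double-derivations} and $(\Omega^1_{\nc}A)_V\simeq\Omega^1_{\comm}(A_V)$ from Proposition \ref{prop-1-forms}, we obtain an isomorphism
\[
\Der(A_V)\lra{\cong}\Omega^1_{\comm}(A_V).
\]
A diagram chase on generators — writing $\omega=\sum a_0\,\du a_1\,\du a_2$, expanding $\iota_\Theta\omega$ by \eqref{reduced-contraction-general}, and passing to the generators $\{m_{jl}\}$ of the $A_V$-modules via the explicit presentation of $(-)_V$ — identifies this isomorphism with the contraction $\iota\bigl(\Tr(\widehat\omega)\bigr)$ by the $2$-form $\Tr(\widehat\omega)\in\Omega^2_{\comm}(A_V)$; the matrix-entry bookkeeping matches the double-derivation bookkeeping through the trace pairing \eqref{ism-vector-hom}. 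Hence $\Tr(\widehat\omega)$ is nondegenerate. By Lemma \ref{KR-DeRham}, $\Tr(\widehat\omega)$ is also closed. Therefore $\Tr(\widehat\omega)$ is a symplectic form on $\Rep(A,V)$, which is precisely the assertion that $\omega$ satisfies the Kontsevich--Rosenberg principle.
\end{proof}
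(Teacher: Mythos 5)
Your proposal follows essentially the same route as the paper's proof: closedness via Lemma \ref{KR-DeRham}, and nondegeneracy by applying the Van den Bergh functor to the bimodule isomorphism $\iota(\omega)\colon\D A\to\Omega^1_{\nc}A$ and invoking Propositions \ref{prop-1-forms} and \ref{KR-double-derivations}. The two points you add — the smoothness of $A$ needed for Proposition \ref{KR-double-derivations}, and the compatibility of $(\iota(\omega))_V$ with the contraction by $\Tr(\widehat{\omega})$ — are both left implicit in the paper, so your version is if anything more careful; just note that self-duality of $\Omega^1_{\nc}A$ does not by itself imply projectivity, so smoothness is better taken as a standing hypothesis, as in \cite{CBEG07}.
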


\begin{proof}
Let $\omega\in\DDR{2}{A}$ a bi-symplectic form. 
By Lemma \ref{KR-DeRham}, $(\omega)_V$ is a closed 2-form on $A_V$.
To prove the non-degeneracy,
by Propositions \ref{prop-1-forms} and \ref{KR-double-derivations}, we have the map
\[
(\iota(\omega))_V\,\colon \Der(A_V)\lto \Omega^1_{\comm}(A_V),
\]
which is an isomorphism because $\iota(\omega)\,\colon\D A\to \Omega^1_{\nc}A$ is an isomorphism and functors preserve them.
\end{proof}

\end{document}